\def\namedlabel#1#2{\begingroup
    #2%
    \def\@currentlabel{#2}%
    \phantomsection\label{#1}\endgroup
}
\definecolor{dullmagenta}{rgb}{0.4,0,0.4}   
\definecolor{darkblue}{rgb}{0,0,0.4}
\definecolor{darkgreen}{rgb}{0,0.4,0}
\def\Xint#1{\mathchoice
   {\XXint\displaystyle\textstyle{#1}}%
   {\XXint\textstyle\scriptstyle{#1}}%
   {\XXint\scriptstyle\scriptscriptstyle{#1}}%
   {\XXint\scriptscriptstyle\scriptscriptstyle{#1}}%
   \!\int}
\def\XXint#1#2#3{{\setbox0=\hbox{$#1{#2#3}{\int}$}
     \vcenter{\hbox{$#2#3$}}\kern-.5\wd0}}
\def\dashint{\Xint-}
\newtheorem{theorem}{Theorem}[section]
\newtheorem*{theorem*}{Theorem}
\newtheorem{lemma}[theorem]{Lemma}
\newtheorem*{lemma*}{Lemma}
\newtheorem{proposition}[theorem]{Proposition}
\newtheorem{corollary}[theorem]{Corollary}
\theoremstyle{definition}
\theoremstyle{remark}
\newtheorem{remark}[theorem]{Remark}
\newtheorem{question*}[theorem]{Question}
\numberwithin{equation}{section}
\theoremstyle{theorem}
\newtheorem{ltheorem}{Theorem}
\newcommand{\esssup}{\operatorname{ess \, sup}}
\begin{document}


\null

\vskip-40pt

\null

\title[Nondoubling Calder\'on-Zygmund theory | A dyadic approach]{Nondoubling Calder\'on-Zygmund theory \\ |a dyadic approach|}

\author{Jos\'e M. Conde-Alonso and Javier Parcet}


\addtolength{\parskip}{+1ex}

\maketitle

\null

\vskip-55pt

\null

\begin{abstract}
Given a measure $\mu$ of polynomial growth, we refine a deep result by David and Mattila to construct an atomic martingale filtration of $\mathrm{supp}(\mu)$ which provides the right framework for a dyadic form of nondoubling harmonic analysis. Despite this filtration being highly irregular, its atoms are comparable to balls in the given metric  |which in turn are all doubling| and satisfy a weaker but crucial form of regularity. Our dyadic formulation is effective to address three basic questions:
\begin{itemize}
\item[i)] A dyadic form of Tolsa's RBMO space which contains it.

\item[ii)] Lerner's domination and $A_2$-type bounds for nondoubling measures.

\item[iii)] A noncommutative form of nonhomogeneous Calder\'on-Zygmund theory.
\end{itemize}
Our martingale RBMO space preserves the crucial properties of Tolsa's original definition and reveals its interpolation behavior with the $L_p$ scale in the category of Banach spaces, unknown so far. On the other hand, due to some known obstructions for Haar shifts and related concepts over nondoubling measures, our pointwise domination theorem via sparsity naturally deviates from its doubling analogue. In a different direction, matrix-valued harmonic analysis over noncommutative $L_p$ spaces has recently produced profound applications. Our analogue for nondoubling measures was expected for quite some time. Finally, we also find a dyadic form of the Calder\'on-Zygmund decomposition which unifies those by Tolsa and L\'opez-S\'anchez/Martell/Parcet.   
\end{abstract}

\null

\vskip-30pt

\null

\section*{\bf Introduction}\label{RBMO.Intro}

Originally inspired by the analysis of the Cauchy transform over non-Lipschitz curves in the complex plane, nonhomogeneous harmonic analysis has received much attention in the last twenty years. The theory has evolved in many different directions and nowadays a large portion of classical Calder\'on-Zygmund theory has been successfully transferred. Cotlar's inequality, $L_p$ boundedness of singular integrals, the $\mathrm{BMO}$ class of John and Nirenberg, or $Tb$ theorems have nonhomogeneous counterparts when the underlying metric measure space $(\Omega,d,\mu)$ is only assumed to have polynomial growth. This means that balls are only assumed to satisfy the inequality $\mu(B(x,r)) \lesssim \hskip1pt r^n$ for all $x \in \Omega$, every $r>0$ and some positive number $n$. A very nice comprehensive survey on the subject can be consulted in \cite{eiderman-volbergSurvey}. In a different direction, dyadic techniques and more general probabilistic methods are among the most effective tools in the homogeneous setting, where the doubling condition $\mu(B(x,2r)) \lesssim \mu(B(x,r))$ holds for all balls in the space. Dyadic maximal and dyadic square functions usually present a better behavior than their centered analogues due to their martingale nature, which has been largely exploited for doubling measures. Moreover, in the last decade or so we also find in the literature new and simple dyadic operators whose properties have deep consequences in Calder\'on-Zygmund theory. Indeed, Petermichl discovered in \cite{petermichlPhD} that the Hilbert transform can be represented as an average of simpler dyadic operators called Haar shifts. Its implication in the so-called $A_2$ conjecture led Hyt\"onen to generalize this representation theorem to other Calder\'on-Zygmund operators \cite{hytonenA2}. Slightly earlier, Lerner also introduced in \cite{lerner2010} his profound median oscillation formula from which he later discovered a surprising dyadic domination principle for Calder\'on-Zygmund operators, to which we shall go back later. Lerner's results go far beyond the fact that they yield a very simple approach to the $A_2$ conjecture. Unfortunately, despite some isolated exceptions like the remarkable use of random dyadic lattices by Nazarov, Treil and Volberg in \cite{ntvTb} or the recent analysis of Haar shifts in \cite{lopezsanchez-martell-parcet2014}, dyadic and more elaborated probabilistic techniques have not been explored systematically in the context of nondoubling measures. 

The purpose of this paper is to provide the tools for a dyadic form of nondoubling harmonic analysis and to answer some basic questions in the theory as applications of this approach. More precisely, our main motivations and results are the following:

\noindent \textbf{A. Dyadic filtrations.} Let $(\Omega,\mu)$ be a $\sigma$-finite measure space equipped with an increasing filtration of $\sigma$-subalgebras $\Sigma_1, \Sigma_2, \Sigma_3, \ldots$ whose union is dense in $(\Omega,\mu)$. Let us write $\mathsf{E}_k$ for the corresponding set of conditional expectations. The concept of regularity for martingale filtrations imposes that $\mathsf{E}_kf \lesssim \mathsf{E}_{k-1}f$ for all measurable $f: \Omega \to \mathbb{C}$. The similarity with the doubling condition is clear and in fact regularity reduces to dyadic doublingness for dyadic lattices. An elementary observation prior to developing dyadic harmonic analysis over nondoubling measures is that doublingness and dyadic doublingness are certainly very different concepts. Indeed, despite the existence of many doubling balls in measure spaces with polynomial growth, dyadically doubling lattices are not to be expected in general. A more reasonable aim is to construct an atomic martingale filtration whose atoms are comparable to balls, which in turn are all doubling. Any such filtration will generally be highly irregular, but fortunately this is not a serious obstruction for most of the key $L_p$ martingale inequalities. In addition, we could also hope for an alternative form of regularity more adapted to the necessities in harmonic analysis. This is made precise in theorem A below, which confirms and makes rigorous the belief |somehow reflected in our recent work \cite{condealonso-mei-parcet,condealonso-parcet}| on a strong relationship between nondoubling measures and irregular filtrations satisfying the conditions above. Recall that a ball $B(x,r)$ is called $(\alpha,\beta)$-doubling when $\mu(B(x,\alpha r)) \le \beta \mu(B(x,r))$.  

\begin{ltheorem} \label{RBMO.theoremA}
Let $\mu$ be a measure of $n$-polynomial growth on $\mathbb{R}^d$. Then there exist positive constants $\alpha,\beta > 100$ and a two-sided filtration 
$\{\Sigma_k: k \in \mathbb{Z}\}$ of atomic $\sigma$-algebras of $\mathrm{supp}(\mu)$ that satisfy the following properties, where $\Pi(\Sigma)$ denotes the set of atoms in the filtration$\hskip1pt:$
\begin{itemize}
\item[\emph{i)}] The $\sigma$-algebras $\Sigma_k$ are increasingly nested.

\item[\emph{ii)}] The union of $L_\infty(\mathbb{R}^d, \Sigma_k, \mu)$ is weak-$*$ dense in $L_\infty(\mu)$.

\item[\emph{iii)}] If $Q \in \Pi(\Sigma)$, there exists an $(\alpha,\beta)$-doubling ball $B_Q$ with $B_Q \subset Q \subset 28B_Q$.

\item[\emph{iv)}] If $x \in Q \in \Pi(\Sigma)$, then $$R \ = \bigcap_{\begin{subarray}{c} S \in \Pi(\Sigma) \\ S \supsetneq Q \end{subarray}} S \qquad \Rightarrow \qquad \int_{\alpha B_R\setminus 56B_Q} \frac{d\mu(y)}{|x-y|^n}  \lesssim_{n,d,\alpha,\beta} 1.$$
\end{itemize}
\end{ltheorem}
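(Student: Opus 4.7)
The plan is to refine the deep lattice construction of David and Mattila, which already produces atoms framed by doubling balls, so that it additionally yields the \emph{skipped-scale} regularity estimate (iv). The starting point is to fix large parameters $A \gg \alpha \gg 1$ and $\beta$ of polynomial-growth type, chosen so that any ball which is not $(\alpha,\beta)$-doubling loses a quantitatively definite proportion of its mass when its radius is inflated by the factor $\alpha$. At each generation $k \in \mathbb{Z}$ I would select a maximal $A^{-k}$-separated net $\{x_i^k\}$ in $\mathrm{supp}(\mu)$ and attach to each center $x_i^k$ an $(\alpha,\beta)$-doubling ball of radius in a controlled window near $A^{-k}$; when no such doubling scale is available, a specific fall-back ball of the default size is used instead. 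The atoms $Q_i^k$ are then produced by a Voronoi-type assignment of points of $\mathrm{supp}(\mu)$ to these centers, corrected so as to nest inside the parent atom at generation $k-1$. This yields (i) and (ii) once one lets $A^{-k} \to 0$, while (iii) is encoded by construction provided the ratios $A,\alpha$ are tuned so that the Voronoi cell sits inside $28 B_Q$.

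The main obstacle is property (iv). When the parent $R$ of $Q$ lies at generation $k-1$, the radius $r_R$ of $B_R$ may exceed $r_Q$ by many powers of $A$ if all intermediate generations were ``stopped'' in the non-doubling case. For fixed $x \in Q$, I would dyadically decompose the annular region $\alpha B_R \setminus 56 B_Q$ into shells $S_j = \{2^j r_Q \le |x-y| < 2^{j+1} r_Q\}$. Polynomial growth yields $\mu(S_j) \lesssim (2^j r_Q)^n$, so each shell contributes $\lesssim 1$ to the integral, and the question reduces to bounding the number of \emph{non-negligible} shells. The key input is the opening mass-loss dichotomy: any shell within $\alpha B_R$ that carries $\mu$-mass comparable to $\rho^n$ at its scale $\rho$ must harbour an $(\alpha,\beta)$-doubling ball of that radius; but then the maximality built into the selection procedure at the corresponding intermediate generation $k'$ would have forced the existence of an atom $S$ with $Q \subsetneq S \subsetneq R$, contradicting the role of $R$ as the immediate parent of $Q$. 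Hence only $O(1)$ shells can be non-negligible, and the remaining shells sum geometrically.

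I expect the main technical difficulty to lie in a careful simultaneous calibration of the constants $A,\alpha,\beta$, the selection windows and the geometric constants $28$ and $56$ appearing in (iii) and (iv), so that the mass-loss dichotomy and the nesting procedure close together with quantitative thresholds depending only on $n$ and $d$. Secondary points to address are the two-sided nature of the filtration, which requires handling arbitrarily large scales by stopping the maximal nets once a single cell absorbs all of $\mathrm{supp}(\mu)$, and the weak-$*$ density in (ii), for which a standard approximation argument based on the triviality of $\bigcap_k \Sigma_k$ at small scales should suffice.
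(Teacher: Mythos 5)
Your high-level strategy is aligned with the paper's — produce an auxiliary lattice framed by balls, keep only doubling atoms, and bound the integral in (iv) by a shell decomposition whose non-negligible terms are controlled — but the central step of (iv) has a genuine gap, and the construction of the lattice is proposed but not actually carried out.

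The gap is in the ``mass-loss dichotomy'' used to count non-negligible shells. You argue that if a shell $S_j$ at scale $\rho = 2^j r_Q$ carries mass comparable to $\rho^n$, then it must contain an $(\alpha,\beta)$-doubling ball of radius $\rho$, and that the selection procedure would then have produced an atom $S$ with $Q \subsetneq S \subsetneq R$, contradicting that $R$ is the immediate parent of $Q$. This inference does not close. A doubling ball of radius $\rho$ found inside the shell is typically centered at a point $y$ with $|x-y| \approx \rho$, far from $x$ and from $Q$. The atom generated by that doubling ball need not contain $Q$ at all; it may well be disjoint from $Q$ and sit in a different branch of the lattice. Nothing in the Voronoi-nesting scheme forces an ancestor of $Q$ to appear just because there is a doubling ball somewhere in an annulus around $Q$. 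So the contradiction with the immediacy of $R$ is not established, and the count of non-negligible shells is not controlled.

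The paper avoids this by working with an actual David--Mattila lattice $\mathscr{D}$ and then extracting the doubling sub-filtration $\Sigma \subset \mathscr{D}$. The crucial input is that the intermediate $\mathscr{D}$-ancestors $Q^{(1)}, \dots, Q^{(m-1)}$ of $Q$ up to $\widehat Q$ are by definition all non-doubling, and David--Mattila's Lemma 5.31 gives the quantitative decay
$\mu(\alpha B_{Q^{(j)}}) \le A^{-10n(\operatorname{gen}(Q^{(j)}) - \operatorname{gen}(\widehat Q) - 1)} \mu(\alpha B_{\widehat Q})$. Because all these balls contain $x$ (they are the chain of ancestors of $Q$), the shell decomposition is naturally indexed by the annuli $\alpha B_{Q^{(j)}} \setminus \alpha B_{Q^{(j-1)}}$, the numerator decays geometrically in $j$, and the sum converges. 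This is what is missing in your proposal: you need a decay estimate tied to the \emph{specific chain of ancestors of $Q$}, not a generic existence statement about doubling balls in space. Separately, your plan to handle the two-sided (arbitrarily large scale) direction by ``stopping once a single cell absorbs all of $\mathrm{supp}(\mu)$'' fails when $\mathrm{supp}(\mu)$ is unbounded; the paper instead fixes one point $x_0$ with infinitely many $(\alpha,\beta)$-doubling balls at arbitrarily large radii and forces those balls into the lattice, which both produces the two-sided filtration and eliminates quadrants so that every $Q$ actually has a parent in $\Sigma$.
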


\vskip-8pt 

Our proof is based on a refinement of a remarkable construction by David and Mattila \cite{david-mattila} that we shall modify to fit our needs. As pointed above, the fact that all atoms in $\Pi(\Sigma)$ are doubling comes at the price of a highly irregular filtration $\Sigma$ which is far from being dyadically doubling. Remarkably, property iv) yields a weaker form of regularity |key for our applications| considered for the first time by Tolsa \cite{tolsaRBMO}. One could say that this construction plays the role in the nonhomogeneous setting of the dyadic lattice for the Lebesgue measure.

\vskip3pt

\noindent \textbf{B. Dyadic BMO spaces.} A BMO space is a set of functions which enjoy bounded mean oscillation in a certain sense. Both mean and oscillation can be measured in many different ways. Most frequently we find BMO spaces referred to averages over balls in a metric measure space, but we may replace these averages by conditional expectations with respect to a martingale filtration or even by the action of a nicely behaved semigroup of operators. The relationship between metric and martingale BMO spaces is well-understood for doubling measures. Indeed, expanding ideas from Garnett and Jones \cite{garnett-jones}, the metric BMO space is equivalent to a finite intersection of martingale BMO spaces over dyadic two-sided filtrations whose atoms look like balls. John-Nirenberg inequalities and the Fefferman-Stein duality theorem still hold for these larger BMO spaces of martingales, which also serve as an interpolation endpoint for the $L_p$ scale. In the nonhomogeneous setting and for certain nondoubling measures satisfying some weak concentration at the boundary, martingale filtrations have provided a framework to define satisfactory BMO spaces \cite{condealonso-mei-parcet}. Our aim here is to construct a dyadic form of Tolsa's RBMO space \cite{tolsaRBMO} |a nonhomogeneous analogue of BMO for measures of polynomial growth| which contains it and enjoys the fundamental properties above. 

Tolsa's RBMO space is very satisfactory since it yields endpoint estimates for singular integrals and also $L_p$ estimates via interpolation of operators. Surprisingly, it seems to be still open the problem of whether $\mathrm{RBMO}(\mu)$ gives the desired interpolation results with the $L_p$ scale in the category of Banach spaces. Our dyadic $\mathrm{RBMO}$ space  |$\mathrm{RBMO}_\Sigma(\mu)$ in what follows| is nothing but the martingale BMO space defined over the filtration constructed in theorem A. It models $\mathrm{RBMO}(\mu)$ in a way similar to how dyadic $\mathrm{BMO}$ models the classical $\mathrm{BMO}$ space. 

\begin{ltheorem} \label{RBMO.theoremB}
The space $\mathrm{RBMO}_{\Sigma}(\mu)$ satisfies$\hskip1pt :$
\begin{itemize}
\item[\emph{i)}] $\mathrm{RBMO}(\mu) \subset \mathrm{RBMO}_{\Sigma}(\mu)$.
\item[\emph{ii)}] John-Nirenberg inequality and Fefferman-Stein duality theory.
\item[\emph{iii)}] Interpolation in the category of Banach spaces $$[\mathrm{RBMO}_\Sigma(\mu), L_1(\mu)]_{\frac{1}{p}} = L_p(\mu) \quad \mbox{for} \quad 1<p<\infty.$$ We also have $[L_\infty(\mu),\mathrm{H}_\Sigma^1(\mu)]_{\frac{1}{p}} = L_p(\mu)$ for the predual $\mathrm{H}_\Sigma^1(\mu)$ of $\mathrm{RBMO}_\Sigma(\mu)$. 
\end{itemize}
\end{ltheorem}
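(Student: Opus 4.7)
The plan is to define $\mathrm{RBMO}_\Sigma(\mu)$ as the martingale BMO space attached to the filtration $\Sigma$ of Theorem A, which reduces parts (ii) and (iii) to general martingale theory and concentrates the genuinely nondoubling content in part (i). The norm would take the standard form $\|f\|_{\mathrm{RBMO}_\Sigma} = \sup_{Q \in \Pi(\Sigma)} \frac{1}{\mu(Q)} \int_Q |f - \mathsf{E}_Q f|\, d\mu$, augmented if necessary by the jump term $\sup |\mathsf{E}_Q f - \mathsf{E}_R f|$ where $R$ is the $\Sigma$-parent of $Q$.

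For (i), fix $f \in \mathrm{RBMO}(\mu)$ with Tolsa norm $\|f\|_*$. I would establish two facts for every atom $Q \in \Pi(\Sigma)$: the oscillation estimate $\frac{1}{\mu(Q)}\int_Q |f - \mathsf{E}_Q f|\, d\mu \lesssim \|f\|_*$ and the jump estimate $|\mathsf{E}_Q f - \mathsf{E}_R f| \lesssim \|f\|_*$ for the $\Sigma$-parent $R$ of $Q$. The oscillation estimate uses the sandwich $B_Q \subset Q \subset 28 B_Q$ with $B_Q$ doubling (Theorem A, iii): Tolsa's norm controls the mean oscillation of $f$ on $28 B_Q$ against $m_{B_Q}(f)$ up to a factor $\mu(\alpha B_Q)/\mu(B_Q)$ absorbed by the doubling constant of $B_Q$. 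The jump estimate uses Tolsa's inequality for averages over nested doubling balls, $|m_{B_Q}(f) - m_{B_R}(f)| \lesssim (1 + K_{B_Q, B_R})\|f\|_*$; the crucial observation is that Theorem A(iv) delivers exactly the uniform bound on the potential integral that controls $K_{B_Q, B_R}$.

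Parts (ii) and (iii) are then essentially pure martingale theory. For an atomic $\sigma$-finite filtration, the John-Nirenberg inequality, the Fefferman-Stein duality $\mathrm{H}^1_\Sigma(\mu)^* = \mathrm{RBMO}_\Sigma(\mu)$, and the complex interpolation identity $[\mathrm{RBMO}_\Sigma(\mu), L_1(\mu)]_{1/p} = L_p(\mu)$ hold without any regularity assumption on the filtration. I would cite the standard martingale references (Garsia's monograph for the first two, and the martingale Hardy space literature of Weisz and Mei for interpolation) and verify that the hypotheses apply. The dual identity $[L_\infty(\mu), \mathrm{H}^1_\Sigma(\mu)]_{1/p} = L_p(\mu)$ follows by duality once $\mathrm{H}^1_\Sigma(\mu)$ is identified via its atomic decomposition. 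This is precisely why passing from Tolsa's $\mathrm{RBMO}(\mu)$ to the martingale space $\mathrm{RBMO}_\Sigma(\mu)$ resolves the interpolation question that was open for the original space.

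The main technical obstacle is the jump estimate in (i). Tolsa's coefficient $K_{B_Q, B_R}$ is defined for arbitrary nested doubling balls, whereas the balls $B_Q, B_R$ coming from the refined David--Mattila construction only sit inside dilated versions of the nested atoms $Q \subsetneq R$ and are not themselves nested. Bridging the two structures requires a careful annular decomposition: one splits the integral defining $K_{B_Q, B_R}$ into a contribution near $B_Q$ handled by the doubling of $B_Q$, a contribution in $\alpha B_R \setminus 56 B_Q$ controlled by Theorem A(iv), and a remainder outside $\alpha B_R$ absorbed by the $n$-polynomial growth of $\mu$. Once this bridging is in place, the classical martingale machinery delivers the rest of the theorem with no further input from the geometry of $\mu$.
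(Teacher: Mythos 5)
Your proposal follows the paper's route almost exactly: define $\mathrm{RBMO}_\Sigma(\mu)$ as the martingale BMO space over $\Sigma$, prove the inclusion (i) by sandwiching the atom $Q$ between the doubling ball $B_Q$ and $28B_Q$ and controlling the parent-child jump via Tolsa's $K_{B_1,B_2}$ coefficient together with Theorem A(iv), and invoke general martingale theory (Garsia, noncommutative martingale interpolation) for (ii) and (iii). One point worth making firmer: the jump term in your norm is not optional but essential, since the ``bmo'' seminorm $\sup_Q \dashint_Q |f - \langle f\rangle_Q|\,d\mu$ alone does \emph{not} interpolate correctly with $L_1(\mu)$ for a highly irregular filtration like $\Sigma$; the paper encodes this automatically by using the genuine martingale BMO norm $\sup_k \|\mathsf{E}_{\Sigma_k}|f-\mathsf{E}_{\Sigma_{k-1}}f|^2\|_\infty^{1/2} \sim \sup_Q \dashint_Q |f - \langle f\rangle_{\widehat{Q}}|\,d\mu$, which subtracts the \emph{parent} average and is what Garsia-type interpolation requires. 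Also, in your annular decomposition for the jump estimate there is in fact no remainder region to handle outside $\alpha B_{\widehat{Q}}$: with the paper's choice $\alpha = 2\cdot 28^2$ one has $28^2 B_{\widehat{Q}} \subset \alpha B_{\widehat{Q}}$, so the entire integral defining $K_{28B_Q,\,28^2 B_{\widehat{Q}}}$ splits into the region near $28B_Q$ (doubling) and $\alpha B_{\widehat{Q}}\setminus 56B_Q$ (Theorem A(iv)), with nothing left over.
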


\vskip-5pt

An immediate consequence of properties i) and iii) above is that Tolsa's RBMO enjoys the same interpolation properties, which solves the problem recalled above. A curiosity |which we shall justify in the more general context of theorem D below| is that Calder\'on-Zygmund extrapolation holds for kernels satisfying the H\"ormander condition, instead of the stronger and commonly used Lipschitz smoothness. In addition, theorems A and B are not limited to the Euclidean setting $(\mathbb{R}^d,\mu)$ but also hold for any upper doubling metric measure space $(\Omega,\mu)$, see remark \ref{RBMO.metricspaces}. Properties ii) and iii) follow from the martingale nature of $\mathrm{RBMO}_\Sigma(\mu)$, while property i) exploits the structure of $\Sigma$. It is precisely property i) what makes $\mathrm{RBMO}_\Sigma(\mu)$ special among martingale $\mathrm{BMO}$ spaces because Calder\'on-Zygmund operators map $L_\infty(\mu)$ into $\mathrm{RBMO}_\Sigma(\mu)$. 

\vskip5pt

\noindent \textbf{C. Oscillation and dyadic domination.} Consider an atomic filtration $\{\Sigma_k : k \in \mathbb{Z}\}$ in $(\Omega,\mu)$ whose atoms |measurable sets that are not decomposable into smaller measurable sets| are denoted by $\Pi(\Sigma)$. Given $Q \in \Pi(\Sigma)$, let us write $\langle f \rangle_Q = \dashint_Q f d\mu = \mu(Q)^{-1} \int_Q f d\mu$ for the $\mu$-average of $f$ over $Q$. Working with Euclidean spaces, the  Lebesgue measure and the standard dyadic filtration |so that $\Pi(\Sigma)$ becomes the set of all dyadic cubes| Haar shift operators take the form $$f \mapsto \sum_{Q,R \in \Pi(\Sigma)} \alpha_{QR} \hskip1pt \langle f \rangle_Q \hskip1pt \chi_R$$ with $\alpha_{QR} = 0$ when the generations of $Q$ and $R$ are very far apart. This difference between generations is called the complexity of the operator. Lerner's median oscillation formula \cite{lerner2010} relates the value of a general function $f$ at a point $x$ to its oscillations on a particularly nice family of dyadic cubes containing $x$. This led to a highly unexpected upper bound for Calder\'on-Zygmund operators |first in norm \cite{lernerPointwise} and pointwise afterwards \cite{condealonso-rey2014,lerner-nazarov2014}, see also the recent papers \cite{lacey2015,lerner2016} for more general operators| in terms of very simple dyadic operators $$|Tf(x)| \, \lesssim \, \sum_{j \in \mathcal{J}} \mathcal{A}_j |f|(x) \qquad \mbox{with} \qquad \mathcal{A}_jf = \sum_{Q \in \mathcal{S}_j} \langle f \rangle_Q \chi_Q,$$ where $\mathcal{J}$ in the above formula is finite and the sets of cubes $\mathcal{S}_j$ are sparse families |whose definition is given in section \ref{RBMO.section3}|  which depend on $T$ and $f$. 
This dyadic domination principle is so accurate that it encodes all the smoothness subtleties of Calder\'on-Zygmund operators by the sparseness of certain positive dyadic operators. Beyond its intrinsic depth, it also yields what can be arguably considered the simplest approach to the $A_2$ theorem. The first result in this direction for nondoubling measures is due to H\"anninnen \cite{hanninen2015}, who has recently extended Lerner's oscillation formula, but his approach was too limited to provide pointwise domination of nondoubling Calder\'on-Zygmund operators by sparse shifts or to explore related weighted inequalities. 

Our dyadic approach here is simple enough to deduce the pointwise dyadic domination and related $A_2$ bounds from H\"anninen's oscillation formula. Both results hold for standard nondoubling Calder\'on-Zygmund operators |whose definition is postponed to the body of the paper| in terms of sparse operators adapted to our filtration $\Sigma$ from theorem A. However, as we just indicated above, dyadic covering lemmas are not available in our setting. Also, the negative results in \cite{lopezsanchez-martell-parcet2014} suggest that even if they were available, boundedness of high complexity dyadic shifts by sparse operators is not to be expected. Therefore, we are forced to introduce a modification in our formula which includes a nicely behaved maximal operator $$\mathcal{M}^cf(x) = \sup_{r>0} \frac{1}{\mu(B(x,5r))} \int_{B(x,r)} |f| \;d\mu.$$

\begin{ltheorem} \label{RBMO.theoremC}
Let $\mu$ be a measure of $n$-polynomial growth on $\mathbb{R}^d$ and let $\Sigma = \{\Sigma_k : k \in \mathbb{Z} \}$ the atomic filtration from theorem \emph{A}. Given a Calder\'on-Zygmund operator $T$, the following pointwise estimate holds for certain $\mu$-sparse family $\mathcal{S} \subset \Pi(\Sigma)$ $$Tf(x) \lesssim_{d,n,\mu} \sum_{Q \in \mathcal{S}} \left[ \inf_{y \in Q} \mathcal{M}^c f(y) \right] \chi_Q (x).$$ In particular, if $w$ is an $A_2$ weight with respect to $\mu$, we have $$\big\| T: L_2(wd\mu) \to L_2(wd\mu) \big\| \lesssim_{d,n,\mu} [w]_{A_2(\mu)}^2.$$
\end{ltheorem}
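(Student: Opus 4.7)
The strategy is to reduce theorem C to H\"anninen's nonhomogeneous oscillation formula from \cite{hanninen2015}, applied to the filtration $\Sigma$ constructed in theorem A. After localizing $f$ to a large atom $Q_0 \in \Pi(\Sigma)$ containing $\mathrm{supp}(f)$, so that the general case follows by exhausting $\mathrm{supp}(\mu)$, the formula produces a $\mu$-sparse family $\mathcal{S} \subset \Pi(\Sigma)$ with
\begin{equation*}
|Tf(x)| \ \lesssim \ m_{Tf}(Q_0) + \sum_{Q \in \mathcal{S}} \omega_\lambda(Tf;Q)\,\chi_Q(x),
\end{equation*}
where $\omega_\lambda(Tf;Q)$ is the local median oscillation of $Tf$ over $Q$ at level $\lambda \in (0,1/2)$ and $m_{Tf}(Q_0)$ is a median on $Q_0$. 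The baseline term can be absorbed by one further stopping step, so the entire theorem collapses to the pointwise bound $\omega_\lambda(Tf;Q) \lesssim \inf_{y \in Q} \mathcal{M}^c f(y)$ for each $Q \in \mathcal{S}$.

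To establish this oscillation estimate, I would fix $Q \in \Pi(\Sigma)$ with its doubling companion $B_Q$ from theorem A iii) and split $f = f\,\chi_{56 B_Q} + f\,\chi_{\mathbb{R}^d \setminus 56 B_Q} =: f_1 + f_2$. The local piece $Tf_1$ is treated via the $L_1 \to L_{1,\infty}$ bound for standard nondoubling Calder\'on-Zygmund operators: the oscillation $\omega_\lambda(Tf_1;Q)$ is controlled by an average of $|f|$ over $56 B_Q$. Using the doublingness of $B_Q$, the polynomial growth of $\mu$, and the inclusion $Q \subset 28 B_Q$, this average is comparable to $\mu(B(y,5r))^{-1} \int_{B(y,r)} |f|\,d\mu$ for any $y \in Q$ and for a radius $r$ of order $\mathrm{rad}(B_Q)$, which yields a bound by $\mathcal{M}^c f(y)$ uniformly in $y$.

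The non-local piece $Tf_2$ is the delicate part. Here the oscillation on $Q$ is dominated by kernel differences, and I would organize the complement $\mathbb{R}^d \setminus 56 B_Q$ as a union of shells along the chain $Q = R_0 \subsetneq R_1 \subsetneq R_2 \subsetneq \cdots$ of $\Sigma$-ancestors of $Q$, each of the form $\alpha B_{R_j} \setminus 56 B_{R_{j-1}}$. On each shell, the H\"ormander smoothness of the Calder\'on-Zygmund kernel combined with the key estimate $\int_{\alpha B_{R_j} \setminus 56 B_{R_{j-1}}} |x-y|^{-n}\,d\mu(y) \lesssim 1$ supplied by theorem A iv) produces a contribution comparable to $\mu(\alpha B_{R_j})^{-1}\int_{\alpha B_{R_j}} |f|\,d\mu$. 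Each such average is in turn $\lesssim \mathcal{M}^c f(y)$ for any $y \in Q$, thanks to the doublingness of $\alpha B_{R_j}$ and the $5r$-normalization in $\mathcal{M}^c$. Telescoping over $j$ then supplies the desired infimum bound.

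The chief technical obstacle I anticipate is precisely this shell decomposition along the ancestor chain: in the nonhomogeneous setting the doubling balls $B_{R_j}$ have no uniform scale relationship, so the classical geometric-series argument breaks down and property iv) of theorem A must carry the entire weight of summing over shells, with constants that must remain genuinely independent of the length of the chain. Once the pointwise domination is in hand, the weighted corollary follows by standard arguments: nondoubling sparse forms obey the expected $[w]_{A_2(\mu)}$ estimate on $L_2(w\,d\mu)$ via duality, and $\mathcal{M}^c$ is bounded on $L_2(w\,d\mu)$ with constant $\lesssim [w]_{A_2(\mu)}$ by the Fefferman-Stein argument adapted to the $5r$-dilation, giving the quadratic dependence $[w]_{A_2(\mu)}^2$ stated in the theorem.
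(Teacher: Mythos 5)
Your sketch omits the essential new ingredient of H\"anninen's nonhomogeneous formula. In the nondoubling setting the median oscillation formula is not
\[
|Tf - m_{Q_0}(Tf)|\,\chi_{Q_0} \ \lesssim\ \sum_{Q\in\mathcal{S}} \omega_\lambda(Tf;Q)\,\chi_Q,
\]
but rather (as stated in \eqref{RBMO.hanninen})
\[
|Tf - m_{Q_0}(Tf)|\,\chi_{Q_0} \ \lesssim\ \sum_{Q\in\mathcal{S}} \Bigl(\omega_\lambda(Tf;Q) + \bigl|m_Q(Tf) - m_{\widehat{Q}}(Tf)\bigr|\Bigr)\,\chi_Q.
\]
The extra median-difference term $|m_Q(Tf) - m_{\widehat{Q}}(Tf)|$ is precisely the contribution of the irregularity of the filtration, and it cannot be absorbed into the oscillation. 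Your proposal never estimates it. In the paper this term is the heart of the argument: it is split into four pieces, and it is the piece involving $T(f\chi_{56 B_{\widehat{Q}}\setminus 56 B_Q})$ that is controlled by summing over the intermediate $\mathscr{D}$-cubes between $Q$ and $\widehat{Q}$ using \eqref{RBMO.keyproperty} (equivalently, property iv) of theorem A). Without treating this term, the proof is incomplete.

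Relatedly, you misattribute the role of property iv) in the far-field estimate for $Tf_2$. Property iv) controls the \emph{single} $\Sigma$-step from $Q$ to its immediate $\Sigma$-parent $\widehat{Q}$, not the infinite chain of shells going out to infinity. For $Tf_2 = T(f\chi_{(56B_Q)^c})$, the shell decomposition runs over the $\mathscr{D}$-ancestors $Q^{(j)}$ for all $j \ge 0$, and the sum over $j$ converges because the Lipschitz smoothness of the kernel supplies a decaying factor $A^{-\gamma j}$: each shell contributes roughly $A^{-\gamma j} r(B_{Q^{(j)}})^{-n}\int_{56 B_{Q^{(j+1)}}}|f|\,d\mu$, which after the geometric sum is dominated by $\sup_j \mu(\alpha B_{Q^{(j+1)}})^{-1}\int_{56 B_{Q^{(j+1)}}}|f|\,d\mu \lesssim \inf_Q \mathcal{M}^c f$. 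The H\"ormander condition alone, combined with your shell-by-shell ``$\lesssim 1$'' estimate, would produce a divergent sum of uniform contributions, or at best an $L_\infty$-type bound (which suffices for theorem D but not for the averaged bound needed here). So for theorem C you genuinely need the Lipschitz modulus, and property iv) is reserved for the finite chain inside the median-difference estimate.

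The weighted part of your sketch is in the right spirit: the paper does indeed obtain the $[w]_{A_2(\mu)}^2$ exponent by composing a sparse operator bound (linear in $[w]_{A_2(\mu)}$, obtained after transplanting the $56B_Q$'s into ordinary dyadic lattices via a covering lemma) with a separate $[w]_{A_2(\mu)}$-linear bound for the auxiliary maximal operator $\tilde{M}_{\mathscr{D}}$. But this final step only becomes meaningful after the pointwise domination has been established, and as explained above, your proposed proof of that domination has two genuine gaps.
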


At this point, it is important to note that a stronger $A_2$ bound |linear in the $A_2$-characteristic in line with the doubling $A_2$ theorem| was announced in \cite[theorem 5.1]{lacey2015} as the result of a personal communication between Treil and Volberg with Lacey. At the time of this writing and after several contacts with Lacey and Volberg, it seems that such assertion was based on a pointwise domination theorem which was not properly justified, and therefore the announced $A_2$-bound cannot be considered to be proven so far. Besides the obstruction to use high complexity Haar shifts, Lacey's approach \cite{lacey2015} and subsequent work \cite{lerner2016} do not immediately generalize to the nondoubling setting. In particular, it is not absolutely clear that Theorem C above is suboptimal.  

\noindent \textbf{D. Matrix-valued harmonic analysis.} Calder\'on-Zygmund operators which act on matrix-valued functions were investigated for the first time in \cite{parcet2009}. In fact, the aim for estimates independent of the matrix size allows to replace matrices by arbitrary von Neumann algebras at almost no cost. This particular instance of noncommutative Calder\'on-Zygmund theory |semicommutative algebras over Euclidean spaces equipped with the Lebesgue measure| has been the main tool in the recent solution of the Nazarov-Peller conjecture \cite{Sukochev} and it has also served to obtain pioneering results on smooth Fourier multipliers with nonabelian frequencies \cite{junge-mei-parcetSmooth}. In addition, it provides the optimal behavior of $L_p$ constants which improves Bourgain's UMD approach. These are strong applications which naturally motivate the study of nondoubling Calder\'on-Zygmund operators acting on matrix-valued functions. In fact, both endpoint estimates for $p=1,\infty$ have been open for quite some time. Potential applications of it point towards the convergence of Fourier series which are frequency supported by free groups. The noncommutative setting goes beyond vector-valued harmonic analysis and introduces a number of genuine new difficulties from the lack of points after quantization, details can be found in \cite{junge-mei-parcetSmooth,parcet2009}. At this point, the theory of noncommutative martingale $L_p$ inequalities |nowadays well-stablished| becomes crucial in conjunction with our martingale approach towards nondoubling harmonic analysis. This combination yields an operator valued form of $\mathrm{RBMO}_\Sigma(\mu)$ |denote by $\mathrm{RBMO}_\Sigma(\mathcal{A})$ in what follows| which admits the expected endpoint estimate for $p=\infty$, so that Calder\'on-Zygmund extrapolation holds by interpolation and duality. The definition of Calder\'on-Zygmund operator in this context |for which we impose noncommutative forms of the size and H\"ormander kernel conditions| and other notions from operator algebra will be given in the body of the paper.

\begin{ltheorem} \label{RBMO.theoremD}
Let $\mu$ be a measure of $n$-polynomial growth on $\mathbb{R}^d$ and let $\Sigma = \{\Sigma_k : k \in \mathbb{Z} \}$ be the atomic filtration from theorem \emph{A}. Let $\mathcal{A} = L_\infty(\mu) \bar\otimes \mathcal{M}$ for some noncommutative measure space $(\mathcal{M},\tau)$ and equip $\mathcal{A}$ with its natural trace $\varphi = \mu \otimes \tau$. Then, every $L_2$-bounded Calder\'on-Zygmund operator satisfies $T: \mathcal{A} \to \mathrm{RBMO}_{\Sigma}(\mathcal{A})$. Moreover, $L_p$-boundedness follows for $1 < p < \infty$ by interpolation and duality, which also hold in this case.
\end{ltheorem}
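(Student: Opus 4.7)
The plan is to reduce the theorem to the single endpoint estimate $T : \mathcal{A} \to \mathrm{RBMO}_\Sigma(\mathcal{A})$, then obtain $L_p$-boundedness for $1 < p < \infty$ by a real/complex interpolation argument and duality. For the interpolation step I would lift Theorem B.iii to the semicommutative setting: the martingale nature of $\mathrm{RBMO}_\Sigma(\mu)$ allows one to reproduce the argument with the noncommutative martingale $L_p$-theory of Pisier--Xu and Junge--Musat, provided the predual $\mathrm{H}^1_\Sigma(\mathcal{A})$ is defined via a conditioned atomic/$h^1$-type decomposition compatible with $\Sigma$. Duality pairs $\mathrm{H}^1_\Sigma(\mathcal{A})$ with $\mathrm{RBMO}_\Sigma(\mathcal{A})$ through $\varphi = \mu \otimes \tau$, and once $T : \mathcal{A} \to \mathrm{RBMO}_\Sigma(\mathcal{A})$ is established, the formal adjoint (which satisfies the same hypotheses) gives $T : \mathrm{H}^1_\Sigma(\mathcal{A}) \to L_1(\mathcal{A})$. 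Interpolating the two endpoints with the $L_2$ hypothesis yields the $L_p$ bounds.

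For the endpoint, fix $f \in \mathcal{A}$ with $\|f\|_\mathcal{A} \le 1$ and fix an atom $Q \in \Pi(\Sigma)$ with parent $R$ and doubling ball $B_Q$ given by Theorem A.iii. Following the scalar template, the $\mathrm{RBMO}_\Sigma(\mathcal{A})$ norm splits into a mean oscillation part (with row and column versions, computed via $\mathsf{E}_Q \otimes \mathrm{id}_\mathcal{M}$) and a difference-of-averages part between consecutive generations. Decompose
\[
f \ = \ f \chi_{56 B_Q} \, + \, f \chi_{\alpha B_R \setminus 56 B_Q} \, + \, f \chi_{(\alpha B_R)^c} \ = \ f_1 + f_2 + f_3.
\]
The local piece $Tf_1$ is handled by $L_2(\mathcal{A})$-boundedness of $T \otimes \mathrm{id}_\mathcal{M}$ combined with $\mu(56 B_Q) \lesssim \beta \, \mu(B_Q) \lesssim \beta \, \mu(Q)$ (since $\alpha > 100$ and $B_Q$ is $(\alpha,\beta)$-doubling), which yields the required operator-norm oscillation bound on $Q$. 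The intermediate piece $Tf_2(x)$ is estimated for $x \in Q$ directly from the kernel size condition $\|k(x,y)\|_\mathcal{M} \lesssim |x-y|^{-n}$ and property iv) of Theorem A, giving $\|Tf_2(x)\|_\mathcal{A} \lesssim 1$ pointwise on $Q$.

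The tail $Tf_3$ is the delicate piece: its contribution to the mean oscillation and, crucially, its contribution to the generation-difference $\mathsf{E}_Q Tf - \mathsf{E}_R Tf$ must be controlled by the H\"ormander condition. In the operator-valued setting the pointwise identity $|Tf_3(x) - Tf_3(x')| \le \int \|k(x,z) - k(x',z)\| \, \|f(z)\| \, d\mu(z)$ is no longer available, so I would test the $\mathrm{RBMO}_\Sigma(\mathcal{A})$ norm against $\mathrm{H}^1_\Sigma(\mathcal{A})$-atoms $g$ supported on an atom $S$ with $\tau$-mean zero and $\|g\|_{\mathcal{A}} \lesssim \mu(S)^{-1}$. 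The resulting scalar integral against the kernel difference pulls the noncommutative norm outside, reducing to a Tolsa-type computation which converges by the H\"ormander condition and an iterative application of property iv) of Theorem A to telescope across the chain of atoms between $S$ and a root atom containing $\alpha B_R$.

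The main obstacle will be the last step: designing the atomic $\mathrm{H}^1_\Sigma(\mathcal{A})$-decomposition so that the row/column structure of $\mathrm{RBMO}_\Sigma(\mathcal{A})$ is matched by an appropriate $\mathrm{H}^1_c(\mathcal{A}) + \mathrm{H}^1_r(\mathcal{A})$ splitting and so that scalar testing against atoms genuinely captures the noncommutative BMO norm. This is where the irregularity of $\Sigma$ really bites, because standard noncommutative martingale $\mathrm{H}^1$-theory assumes regular filtrations; the way out is to use property iv) of Theorem A as a substitute for regularity exactly as in the scalar Theorem B, extending the argument to the operator-valued setting by replacing absolute values with operator-norm estimates and invoking noncommutative Doob, Burkholder and Stein inequalities only through the conditional expectations $\mathsf{E}_k \otimes \mathrm{id}_\mathcal{M}$, which remain bounded regardless of regularity.
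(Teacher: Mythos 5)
Your high-level strategy is the same as the paper's: prove a single endpoint estimate $T:\mathcal{A}\to\mathrm{RBMO}_\Sigma(\mathcal{A})$ and then obtain $L_p$-boundedness for $1<p<\infty$ via noncommutative martingale interpolation and duality. Your handling of the local piece (via the column/row $L_2$-boundedness hypotheses plus doublingness of $B_Q$) and of the intermediate piece (via the operator-norm size condition and property iv) of Theorem~A) also matches the paper. However, two aspects of your proposal deviate in ways that point to a genuine gap in understanding.

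First, and most importantly, your treatment of the tail $Tf_3$ is built on a misconception that leads you down an unnecessarily complicated route. You write that the pointwise kernel estimate for the oscillation of $Tf_3$ ``is no longer available'' in the operator-valued setting, and therefore propose to test the $\mathrm{RBMO}_\Sigma(\mathcal{A})$-norm against $\mathrm{H}^1_\Sigma(\mathcal{A})$-atoms. But the identity you write down fails only because you placed the operator absolute value $|Tf_3(x)-Tf_3(x')|$ on the left; replacing it with the operator norm $\|Tf_3(x)-Tf_3(x')\|_{\mathcal{M}}$ restores the inequality by the triangle inequality for Bochner-type integrals and submultiplicativity of the $\mathcal{B}(\mathcal{M})$-norm. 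The reason this \emph{suffices} for the $\mathrm{RBMO}^c_\Sigma$-norm is the crude but adequate operator inequality $|g(x)|^2=g(x)^*g(x)\leq\|g(x)\|^2_{\mathcal{M}}\cdot 1_{\mathcal{M}}$, which gives
\[
\Big\|\dashint_Q |g|^2\,d\mu\Big\|_{\mathcal{M}}^{1/2}\;\leq\;\Big(\dashint_Q\|g\|_{\mathcal{M}}^2\,d\mu\Big)^{1/2}.
\]
After this transition one is in a fully scalar situation, and the operator-valued H\"ormander condition \eqref{RBMO.hormander} (which is itself an \emph{integral} bound on $\mathcal{B}(\mathcal{M})$-norms) bounds the oscillation of $Tf_3$ directly, uniformly over $x\in Q$ and $z\in\widehat{Q}$. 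This is exactly what the paper does. Atom testing, and the entire apparatus of designing a compatible $\mathrm{H}^1_c+\mathrm{H}^1_r$ decomposition for that purpose, is not needed at the endpoint.

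Second, your concern that ``standard noncommutative martingale $\mathrm{H}^1$-theory assumes regular filtrations'' and your suggestion that property iv) of Theorem~A must compensate for this in the interpolation step are both misplaced. The noncommutative martingale $\mathrm{BMO}$-interpolation theorem of Musat and the Fefferman--Stein/Pisier--Xu duality apply to arbitrary (increasing, weak-$*$ dense) filtrations, regular or not; regularity only matters if one wants $\mathrm{bmo}=\mathrm{BMO}$ or Doob-type maximal regularity, neither of which is invoked here. Property iv) of Theorem~A is used solely in the kernel estimate for the intermediate piece (the term $\mathrm{III}$ in the paper), not in the interpolation machinery. In short: the endpoint estimate is more elementary than you anticipate, and the interpolation is more automatic than you fear.
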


Finally, the properties of the martingale filtration $\Sigma$ yield a dyadic form of Calder\'on-Zygmund decomposition for functions in $L_1(\mu)$.  It resembles the classical one although the irregularity of $\Sigma$ forces to subtract the average over the ancestors of the maximal cubes instead of the actual maximal cubes. This is very similar to the Calder\'on-Zygmund-Gundy decomposition in \cite{lopezsanchez-martell-parcet2014} but goes beyond it, since we can deduce the weak type $(1,1)$ bound for Calder\'on-Zygmund operators from it. In summary, this new Calder\'on-Zygmund type decomposition |theorem \ref{RBMO.CZ} in the text| unifies the decompositions in \cite{lopezsanchez-martell-parcet2014,tolsaWeakType} and admits potential applications in the noncommutative setting.

\vskip5pt

\noindent \textbf{Acknowledgement.} The authors thank Xavier Tolsa for bringing to our attention David and Mattila's construction and for several fruitful discussions on the content of this paper. Both authors were partially supported by CSIC Project PIE 201650E030 and also by ICMAT Severo Ochoa Grant SEV-2015-0554, and the first named author was supported in part by ERC Grant 32501. 


\section{\bf A lattice of cubes and a doubling nonregular filtration}
\label{RBMO.sectiondavidmattila}

For the rest of the paper, fix a Radon measure $\mu$ on $\mathbb{R}^d$ of normalized  $n$-polynomial growth, that is, one for which $\mu(B(x,r)) \leq r^n$ for all $x \in \mathbb{R}^d$ and $r>0$. Even though we work on $\mathbb{R}^d$, we will only be concerned with what happens on the support of the measure $\mu$. Thus, we will slightly abuse notation denoting by $B$ the intersection of the set $B$ with $\mathrm{supp}(\mu)$. Another way of thinking about this is to assume that whenever we write a relation between sets
, we mean that it holds up to a set of $\mu$-measure $0$. To build the doubling filtration $\Sigma$ as stated in the introduction, we will modify a construction that is due to  David and Mattila \cite[theorem 3.2]{david-mattila}. We give the precise statement that we need below, the main novelty being the presence of arbitrarily large cubes and the ``lack of quadrants", which means that whenever we take the union of all the ancestors of a given cube in the lattice we get the full support of the measure $\mu$. This particular feature will be necessary to ensure that the space of $\mathrm{BMO}$ type constructed in section \ref{RBMO.sectionRBMO} interpolates with $L_p(\mu)$.

\begin{proposition}\label{RBMO.davidmattila}
Let $\mu$ be of $n$-polynomial growth on $\mathbb{R}^d$ and fix $\alpha \geq 100$ and $\beta=\alpha^{\ell}$ for some $\ell \geq d+1$. Then, there exist $A \gg \beta$ of the form $A=\alpha^{\ell m}$ and a sequence of partitions $\mathscr{D}=\{\mathscr{D}_k\}_{k\in \mathbb{Z}}$ of $\mathrm{supp}(\mu)$, whose elements are called cubes, with the following properties:
\begin{itemize}
\item[\emph{i)}]The partitions are nested, so that $Q \cap R \in \{\emptyset,Q,R\}$ for any pair of cubes $Q,R$.
\item[\emph{ii)}] For each $k \in \mathbb{Z}$ and $Q \in \mathscr{D}_k$, there exists $x_Q \in \mathrm{supp}(\mu)$ and a radius $r(Q) \sim_{\beta} A^{-k}$ such that the balls $B_Q=B(x_Q,r(Q))$ satisfy:
\begin{itemize}
\item[$\bullet$] $B_Q \cap \mathrm{supp}(\mu) \subset Q \subset 28B_Q \cap \mathrm{supp}(\mu)$.
\item[$\bullet$] The balls $5B_Q$ associated to cubes of the same generation are pairwise disjoint.
\end{itemize}
\item[\emph{iii)}] For each $Q$, either $B_Q$ is $(\alpha,\beta)$-doubling or else $\mu(\alpha B_Q) < \beta^{-i} \mu(\alpha^{i+1}B_Q)$ whenever $\alpha^{i} \leq \beta$.
\item[\emph{iv)}] For every $k\in \mathbb{Z}$ and every $Q\in \mathscr{D}_k$, 
$$
\mathrm{supp}(\mu) \subset \bigcup_{\begin{subarray}{c} R \; \mathrm{cube} \\R \supset Q \end{subarray}} R.
$$
\end{itemize}
\end{proposition}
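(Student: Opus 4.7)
The plan is to take the David-Mattila construction \cite{david-mattila} as a starting point and extend it upward. Their original theorem produces nested partitions satisfying (i)-(iii) at positive generations, but the top generation typically consists of multiple maximal ``quadrants'' whose ancestors never merge, so (iv) fails. My refinement would append additional generations for arbitrarily negative $k$ by iteratively merging quadrants into larger doubling-or-nondoubling balls, while preserving the geometric scale relation $r(Q)\sim_\beta A^{-k}$.

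The inductive step from $\mathscr{D}_k$ to $\mathscr{D}_{k-1}$ would proceed as follows. Fix once and for all a basepoint $x_0\in\mathrm{supp}(\mu)$ and, at every negative generation $k$, declare $x_0$ to be the center of a distinguished cube $Q_0^{(k)}\in\mathscr{D}_k$. The remaining centers $\{x_Q : Q\in\mathscr{D}_{k-1},\ Q\neq Q_0^{(k-1)}\}$ are selected greedily from $\mathrm{supp}(\mu)$, subject to the separation condition that the balls $5B_Q$ of radius $\sim A^{-(k-1)}$ are pairwise disjoint, as in David-Mattila's original argument. For each prospective center $y$ at target scale $A^{-(k-1)}$, David-Mattila's dichotomy |itself a consequence of $n$-polynomial growth together with our choice $\beta=\alpha^\ell$ with $\ell\ge d+1$| produces a radius $r(Q)\sim A^{-(k-1)}$ such that either $B_Q=B(y,r(Q))$ is $(\alpha,\beta)$-doubling or else the alternative estimate in (iii) holds, since otherwise $\mu(\alpha^{\ell+1}B_Q)$ would outgrow the polynomial bound $(\alpha^{\ell+1} r(Q))^n$. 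Nestedness is then imposed by assigning each $R\in\mathscr{D}_k$ to the $Q\in\mathscr{D}_{k-1}$ whose center $x_Q$ minimizes $|x_R-x_Q|$, with ties broken by a fixed total order; the sandwich $B_Q\cap\mathrm{supp}(\mu)\subset Q\subset 28B_Q\cap\mathrm{supp}(\mu)$ follows from the triangle inequality and the fact that $\mathrm{diam}(R)\lesssim A^{-k}\ll A^{-(k-1)}$.

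Property (iv) is enforced by the basepoint convention. By construction, the chain $\cdots\supsetneq Q_0^{(-2)}\supsetneq Q_0^{(-1)}\supsetneq Q_0^{(0)}$ consists of ever larger cubes around $x_0$ with $r(Q_0^{(k)})\to\infty$ as $k\to-\infty$, and since $B(x_0,r(Q_0^{(k)}))\cap\mathrm{supp}(\mu)\subset Q_0^{(k)}$ by (ii), their union exhausts $\mathrm{supp}(\mu)$; when $\mathrm{supp}(\mu)$ is bounded the construction simply stabilizes to the single cube $\mathrm{supp}(\mu)$ once $A^{-k}$ exceeds its diameter. Given any $Q\in\mathscr{D}_j$ and $y\in\mathrm{supp}(\mu)$, for $k$ sufficiently negative both $Q\subset Q_0^{(k)}$ and $y\in Q_0^{(k)}$, so $Q_0^{(k)}$ is a common ancestor of $Q$ containing $y$, which proves (iv).

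The main obstacle is the coherent selection of radii $r(Q)$ across negative generations: each $r(Q)$ must fall in a narrow window around $A^{-k}$ so that the geometric factor $A$ between consecutive generations is respected, while simultaneously satisfying the doubling-or-growth dichotomy. This is precisely where the parameter hierarchy $\beta=\alpha^\ell$ with $\ell\ge d+1$ and $A=\alpha^{\ell m}$ for some large $m$ enters: the factor $m$ provides enough intermediate scales between generations to absorb the tolerance coming from the dichotomy, while $\ell\ge d+1$ ensures that the dichotomy succeeds within polynomial growth. Once these parameters are fixed, the rest |center selection, the nearest-center assignment that yields nestedness, and the basepoint trick ruling out quadrants| is bookkeeping parallel to the argument of \cite{david-mattila}.
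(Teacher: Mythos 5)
Your basepoint idea is the same kernel as the paper's proof: fix a point $x_0$ and arrange that arbitrarily large cubes centered near $x_0$ appear in the lattice, so that the ancestors of any cube eventually exhaust $\mathrm{supp}(\mu)$. But the execution has several gaps, and the most important one is that you cannot simply ``declare'' $x_0$ to be a center at \emph{every} negative generation while preserving property (iii).

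The difficulty is this. In the David--Mattila dichotomy, the admissible radius at generation $k$ must lie in $[\alpha^i A^{-k},\ \beta\alpha^i A^{-k}]$, and if no $(\alpha,\beta)$-doubling radius exists in that window, the construction is forced to pick the \emph{smallest} radius so that the growth alternative in (iii) holds. So at a generation where no doubling radius around $x_0$ exists, the ball $B(x_0,r_k(x_0))$ is small, and there is no way to guarantee the $5R$ covering retains it: a nearby larger ball can absorb it. The paper handles this by (a) a pigeonhole argument that selects a residue class $i\bmod m\ell$ along which $B(x_0,\tfrac78\beta\alpha^j)$ is $(\alpha,\beta)$-doubling for infinitely many $j$, (b) only at those generations choosing the nearly-maximal radius $\tfrac78\beta\alpha^i A^{-k}$ for $x_0$, and (c) a modified $5R$ covering lemma (Lemma 1.2) which, crucially, lets one force a ball of near-maximal radius to be retained in the disjoint subfamily. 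This yields cubes centered at $x_0$ at \emph{infinitely many} but not all generations, which is enough to kill quadrants. Your proposal omits both the pigeonhole step and the mechanism that forces $x_0$'s ball into the covering, and it overclaims by requiring $x_0$ to be a center at every negative generation.

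There is a second, independent gap: the nearest-center rule you propose for nestedness does not produce the sandwich $Q\subset 28B_Q$. Radii at a fixed generation range over a factor $\beta$, and a point $z\in\mathrm{supp}(\mu)$ is only guaranteed to lie in $25B_{Q'}$ for \emph{some} $Q'$ of the covering; if $z$'s child cube is captured by the center $x_Q$ nearest to it, the distance $|z-x_Q|$ can still be of order $\beta\alpha^iA^{-(k-1)}$ while $r(Q)$ can be as small as $\alpha^iA^{-(k-1)}$, so $Q\subset 28B_Q$ fails badly since $\beta=\alpha^\ell\ge 100^{d+1}$. David--Mattila's construction of the cubes from the covering is considerably more delicate precisely to achieve the absolute constant $28$; the paper cites that part verbatim rather than replacing it. You should do the same, and confine the modification to the quadrant-killing step via the modified $5R$ lemma.
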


In the proof of proposition \ref{RBMO.davidmattila} we shall use the $5R$ covering lemma allowing ourselves to choose a particular ball. The modification of the proof is trivial, but we record the statement precisely for the sake of clarity.

\begin{lemma}\label{RBMO.vitali}
Let $A \subset \mathbb{R}^d$ be a set such that each $x\in A$ has an associated ball $B_x$ with radius $r_x$. Pick some $x_0 \in A$ such that $r_{x_0}\geq \frac12 \sup_{x\in A} r_x$. Then, there exists a subcollection $\mathcal{B}=\{B_j\}_{j\in \mathbb{N}} \subset \{B_x\}_{x\in A}$ satisfying:
\begin{itemize}
\item The balls in $\mathcal{B}$ are pairwise disjoint.
\item $A \subset \cup_j 5B_j$.
\item $B_{x_0} \in \mathcal{B}$.
\end{itemize}
The family $\{B_j\}_j$ is called the $5R$-covering of $A$ associated to $x_0$. 
\end{lemma}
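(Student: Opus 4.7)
The plan is to run the standard greedy construction behind the classical $5R$-covering lemma, with the sole modification that the very first selected ball is forced to be $B_{x_0}$. The hypothesis $r_{x_0} \ge R/2$, where $R := \sup_{x \in A} r_x$, implies $R \le 2 r_{x_0} < \infty$, which legitimizes a generation-by-generation scheme.

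Stratify $A$ by scale into generations: set $G_1 = \{x \in A : R/2 \le r_x \le R\}$ and, for $j \ge 2$,
\[
G_j \ = \ \bigl\{x \in A : R \cdot 2^{-j} \le r_x < R \cdot 2^{-j+1}\bigr\},
\]
so that $x_0 \in G_1$ and $A = \bigsqcup_{j \ge 1} G_j$. Now build $\mathcal{B}$ inductively. Start from the singleton $\{B_{x_0}\}$ and extend it, via Zorn's lemma or a plain greedy selection, to a pairwise disjoint subcollection $\mathcal{B}_1 \subset \{B_x : x \in G_1\}$ that is maximal with respect to inclusion. Having constructed $\mathcal{B}_1, \ldots, \mathcal{B}_{j-1}$, let $\mathcal{B}_j$ be a maximal pairwise disjoint subcollection of $\{B_x : x \in G_j\}$ whose elements are also disjoint from every ball previously chosen. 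Finally, set $\mathcal{B} := \bigcup_{j \ge 1} \mathcal{B}_j$.

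Pairwise disjointness and the membership $B_{x_0} \in \mathcal{B}$ are automatic from the construction. For the covering property, fix any $x \in A$ and let $j$ be the index with $x \in G_j$. By maximality, $B_x$ either belongs to $\mathcal{B}_j$ or intersects some ball $B_y \in \mathcal{B}_1 \cup \cdots \cup \mathcal{B}_j$; in both cases, pick such a $B_y$ (taking $B_y = B_x$ when applicable). Since $y \in G_i$ with $i \le j$, one has $r_y \ge R \cdot 2^{-i} \ge R \cdot 2^{-j} > r_x / 2$, and the triangle inequality yields $B_x \subset B(y, r_y + 2 r_x) \subset 5 B_y$, as required.

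No genuine obstacle arises: the argument is essentially bookkeeping. The only mildly delicate point is the appeal to Zorn's lemma inside each generation, but in the intended application $A \subset \mathrm{supp}(\mu)$ with $\mu$ Radon on $\mathbb{R}^d$, so separability reduces the matter to a routine countable greedy selection and no transfinite machinery is needed.
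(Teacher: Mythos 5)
Your proof is correct and carries out exactly the ``trivial modification'' the paper alludes to but does not spell out: the classical generation-by-generation $5R$-covering construction, seeded in the top generation with $B_{x_0}$, which is legitimate precisely because the hypothesis $r_{x_0} \ge \tfrac12 \sup_{x \in A} r_x$ forces $x_0 \in G_1$. One cosmetic nitpick: when $r_x = R$ and $y \in G_1$ the chain only yields $r_y \ge r_x/2$ rather than the strict inequality you wrote, but the inclusion $B_x \subset 5B_y$ requires only $r_x \le 2 r_y$, so nothing breaks.
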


\begin{proof}[Proof of proposition \ref{RBMO.davidmattila}] The proof of theorem 3.2 of \cite{david-mattila} is highly technical and long. Our version uses most of the steps of its proof |although we cannot apply the theorem directly|, and therefore we will only explain the relevant changes to avoid the repetition of arguments that we do not modify. First choose the constant $A$ as the smallest constant of the form $\alpha^{m\ell}$ that is greater than the constant $A_0$ that appears in theorem 3.2 of \cite{david-mattila}. Since $\mu$ has $n$-polynomial growth, for $\mu$-almost every point $x$, the sequence $\{B(x,\frac{7}{8} \beta \alpha^j)\}_{j\in \mathbb{N}}$ contains infinitely many $(\alpha,\beta)$-doubling balls. Fix one such point $x_0$. There exists $i \in \{0,1,\ldots,m\ell-1\}$ such that the subsequence composed of balls whose radii $r=\frac{7}{8} \beta \alpha^j$ satisfy 
$$
j \; \mathrm{mod} \; (m\ell) =i
$$
contains infinitely many $(\alpha,\beta)$-doubling balls. Now, for each generation $k \in \mathbb{Z}$, we proceed as in the proof of \cite[theorem 3.2]{david-mattila}, except for the point $x_0$:
\begin{itemize}
\item For each $k \in \mathbb{Z}$ and $x \in \mathrm{supp}(\mu)$, $x\not=x_0$, we choose a radius $r_k(x)$ such that
\begin{equation*}
\alpha^i A^{-k} \leq r_k(x) \leq \beta \alpha^i A^{-k}
\end{equation*}
according to the following algorithm: either $r_k(x)$ is chosen so that $B(x,r_k(x))$ is $(\alpha,\beta)$-doubling or, if there are no $(\alpha,\beta)$-doubling balls with radii satisfying the equation above, we take $r_k(x)= \alpha^i A^{-k}$.
\item For $x=x_0$, if the ball $B(x_0,\frac{7}{8} \beta \alpha^i A^{-k})$ is $(\alpha,\beta)$-doubling, we choose $r_k(x_0) = \frac{7}{8} \beta \alpha^i A^{-k}$. Notice that, by the above reasoning, we make this choice infinitely many times. If 
$$
B(x_0,\frac{7}{8} \beta \alpha^i A^{-k})
$$ 
is not $(\alpha,\beta)$-doubling, we choose $r_k(x_0)$ as in the previous item.
\end{itemize}

We now fix $k$, and apply lemma \ref{RBMO.vitali} to $\{B(x,5r_k(x))\}_{x \in \mathrm{supp}(\mu)}$ to obtain the $5R$-covering of $\mathrm{supp}(\mu)$ associated to $x_0$, whenever it is possible. This happens infinitely many times by our choices of $r_k(x_0)$. Otherwise, we just apply lemma \ref{RBMO.vitali} without specifying the point.
In either case, the resulting family is the family of disjoint balls $5B_Q$ associated with the cubes of generation $k$. The rest of the proof of properties $(i)-(iii)$ is exactly that of theorem 3.2 in \cite{david-mattila}. Notice that the arguments there do not depend on the size of the cubes being uniformly bounded. This yields a collection $\mathscr{D}=\cup_{k \in \mathbb{Z}} \mathscr{D}_k$ with the mentioned properties. It only remains to show that our construction does not have ``quadrants". But this is easy: given any cube $Q \in \mathscr{D}$, there exists some $R\in \mathscr{D}$ whose associated ball $B_R=B_R(x_0,r(x_0))$ intersects it, because there are arbitrarily large balls of this kind centered at $x_0$. Thus, the claim trivially follows. 
\end{proof}

\begin{remark}
By the proof of theorem 3.2 of \cite{david-mattila}, the cubes in $\mathscr{D}$ have small boundaries: for each $Q\in \mathscr{D}_k$ and $i \in \mathbb{Z}_+$, define
$$
N_{i}^{\mathrm{ext}}(Q)= \{x \in \mathrm{supp}(\mu)\setminus Q: \mathrm{dist}(x,Q) < A^{-k-i}\},
$$
$$
N_{i}^{\mathrm{int}}(Q)= \{x \in Q: \mathrm{dist}(x,\mathrm{supp}(\mu)\setminus Q) < A^{-k-i}\}.
$$
Then $\mu(N_{i}^{\mathrm{ext}}(Q) \cup N_{i}^{\mathrm{int}}(Q)) \leq (c_d \beta^{-3d-1}A)^{-i} \mu(90B_Q)$.
\end{remark}

\begin{remark}
By a modification of the arguments above one may force the apparition of quadrants in the construction of proposition \ref{RBMO.davidmattila}. A quadrant is a proper subset $E$ of $\mathbb{R}^d$ such that a cube $Q\in \mathscr{D}$ that is contained in $E$ has all of its ancestors lying inside $E$; or, equivalently, a proper subset of $\mathbb{R}^d$ that is a union of sets in $\mathscr{D}_k$ for all $k\in \mathbb{Z}$. This is clearly seen, for example, in the case of the real line with the Gaussian measure $\gamma$, the probability measure with density given by 
$$
d\gamma(x) = \frac{1}{\sqrt{2\pi}} e^{-|x|^2/2}dx.
$$
$\gamma$ is nondoubling, but all intervals of the form $[0,a]$ are $(2,2)$-doubling. Therefore, if in the proof we choose the sequence of centers $\{A^k\}_{k\in \mathbb{Z}_+}$ instead of the point $x_0$, and then we repeat the steps of the proof, we will get that the intervals $[0,2A^k]$, for $k\in \mathbb{Z}_+$, belong to the resulting system $\mathscr{D}$. This divides the real line in the usual two quadrants. 
\end{remark}

Using proposition \ref{RBMO.davidmattila} we may now build the filtration $\Sigma$ and prove theorem \ref{RBMO.theoremA}. We say that a cube $Q\in \mathscr{D}$ is $(\alpha,\beta)$-doubling if its associated ball $B_Q$ is $(\alpha,\beta)$-doubling.

\begin{proof}[Proof of theorem \ref{RBMO.theoremA}]
The basic idea of the construction is to exploit the abundance of doubling cubes in $\mathscr{D}$ by constructing a filtration only with them. By lemma 5.28 in \cite{david-mattila}, given any cube $R\in\mathscr{D}$, there exists a (pairwise disjoint) collection of cubes $\{Q_i\} \subset \mathscr{D}$ which are $(\alpha,\beta)$-doubling and such that
\begin{equation} \label{RBMO.existendoblantes}
\mu\left(R \setminus \bigcup_i Q_i \right) = 0.
\end{equation}
On the other hand, by the construction of proposition \ref{RBMO.davidmattila}, any cube $Q \in \mathscr{D}$ is contained in an $(\alpha,\beta)$-doubling cube $R\in \mathscr{D}$. Therefore, we may construct the filtration $\Sigma=\{\Sigma_k\}_{k \in \mathbb{Z}}$ as follows:

\begin{itemize}
\item We start with any $(\alpha,\beta)$-doubling cube $Q_0 \in \mathscr{D}$. We proclaim that $Q_0 \in \Pi(\Sigma_0)$.
\item The parent (in $\Sigma$) $\widehat{Q}_0$ of $Q_0$ is the smallest $(\alpha,\beta)$-doubling cube of $\mathscr{D}$ that contains (properly) $Q_0$. Notice that $\widehat{Q}_0$ exists by item (iv) of proposition \ref{RBMO.davidmattila}. We proclaim that $\widehat{Q}_0$ belongs to $\Pi(\Sigma_{-1})$.
\item Inductively, for $j\geq 1$ we define the $(j+1)$-th ancestor of $Q_0$ as the smallest $(\alpha,\beta)$-doubling cube of $\mathscr{D}$ that contains (properly) the $j$-th ancestor of $Q_0$. We proclaim that it belongs to $\Pi(\Sigma_{-j-1})$. Notice that the union of the $j$-th ancestors of $Q_0$, $j\geq 1$, covers the whole space $\mathrm{supp}(\mu)$.
\item Given a cube $Q\in\Pi(\Sigma)$, its children (in $\Sigma$) are the maximal $(\alpha,\beta)$-doubling cubes of $\mathscr{D}$ that are properly contained in $Q$. These always cover $Q$ by \eqref{RBMO.existendoblantes}. Notice that our definition is consistent: the $j$-th ancestor of $Q_0$ is always a child of the $(j+1)$-th ancestor of $Q_0$. In general, we say that a cube $R\in \Pi(\Sigma)$ belongs to $\Sigma_k$ if its parent $\widehat{R}$ belongs to $\Pi(\Sigma_{k-1})$. 
\end{itemize}

The previous items define the whole filtration $\Sigma=\{\Sigma_k\}_{k\in \mathbb{Z}}$. Also, notice that $\Pi(\Sigma) \subset \mathscr{D}$. The construction immediately yields the properties (i) and (iii) in the statement of theorem \ref{RBMO.theoremA}. Also, the Lebesgue differentiation theorem holds for sets in $\Sigma$, which means that for $\mu$-a.e. $x\in \mathrm{supp}(\mu)$, 
$$
f(x) = \lim_{\begin{subarray}{c} x \in Q \in \Pi(\Sigma) \\ r(B_Q) \to 0 \end{subarray}} \langle f \rangle_Q \;\; \mathrm{if} \; f\in L_1^{\mathrm{loc}}(\mu).
$$
This implies that $\cup_k L_\infty(\mathbb{R}^d,\Sigma_k,\mu)$ is weak-$*$ dense in $L_\infty(\mu)$, so we have (ii). To prove (iv), we need to introduce some notation about ancestors of general cubes $Q$, and not only for the $Q_0$ (which will no longer play an important role). These will be used throughout the rest of the paper. If $k_0<k_1$, we will denote the unique cube $R \in \mathscr{D}_{k_0}$ which contains $Q \in \mathscr{D}_{k_1}$ by $Q^{(k_1-k_0)}$. We will also denote by $\widehat{Q}$ the smallest cube in $\Sigma$ that properly contains $Q \in \Sigma$, and write $\mathrm{gen}(T) = k$ if $T\in \mathscr{D}_k$. The proof of (iv) now follows from the following observation, which constitutes lemma 5.31 in \cite{david-mattila}: if $Q \subset R$ are cubes in $\mathscr{D}$ and if all intermediate cubes $T \in \mathscr{D}$ (that is, all $T \in \mathscr{D}$ such that $Q \subsetneq T \subsetneq R$) are not $(\alpha,\beta)$-doubling, then 
\begin{equation}\label{RBMO.keyproperty}
\mu(\alpha B_T) \leq A^{-10n(\mathrm{gen}(T)-\mathrm{gen}(R)-1)}\mu(\alpha B_R).
\end{equation}
Indeed, taking into account \eqref{RBMO.keyproperty}, we may compute for $x\in Q$
\begin{eqnarray*}
 \int_{\alpha B_{\widehat{Q}} \setminus 56B_Q} \frac{1}{|x-y|^n}d\mu(y)
& \lesssim & 1+\sum_{j=1}^{\mathrm{gen}(Q)-\mathrm{gen}(\widehat{Q})}  \int_{\alpha B_{Q^{(j)}} \setminus \alpha B_{Q^{(j-1)}}} \frac{1}{|x-y|^n}d\mu(y) \\
& \lesssim & 1+\sum_{j=1}^{\mathrm{gen}(Q)-\mathrm{gen}(\widehat{Q})} \frac{\mu(\alpha B_{Q^{(j)}}) }{r(B_{Q^{(j-1)}})^n} \\
& \leq & 1+\sum_{j=1}^{\mathrm{gen}(Q)-\mathrm{gen}(\widehat{Q})} A^{n\;\mathrm{gen}(Q^{(j-1)})} A^{-10n(\mathrm{gen}(Q^{(j)})-\mathrm{gen}(\widehat{Q}) -1)} \mu(\alpha B_{\widehat{Q}}) \\
& \lesssim & 1+\mu(\alpha B_{\widehat{Q}}) A^{n\;\mathrm{gen}(\widehat{Q})} \sum_{j=1}^{\mathrm{gen}(Q)-\mathrm{gen}(\widehat{Q})} A^{-9nj} \lesssim 1. \\
\end{eqnarray*}

This ends the proof of theorem \ref{RBMO.theoremA}. 
\end{proof}

\begin{remark}
Notice that (iv) in theorem \ref{RBMO.theoremA} is similar to a property that holds for sequences of concentric nondoubling cubes in $\mathbb{R}^d$ and was key for the construction in \cite{tolsaRBMO} (and in the weak $(1,1)$ inequality in \cite{tolsaWeakType}). Here we reinterpret it as a (weak) regularity of the filtration $\Sigma$.
\end{remark}


\section{\bf A martingale BMO for nondoubling measures}
\label{RBMO.sectionRBMO}

As we explained in the introduction, we define our dyadic RBMO space, $\mathrm{RBMO}_\Sigma(\mu)$, as the martingale BMO space associated with the filtration $\Sigma$ constructed in section \ref{RBMO.sectiondavidmattila}. We choose the values $\alpha = 2\cdot 28^2$ and $\beta=\alpha^{d+1}$. Given a probability (or $\sigma$-finite) measure space $(\Omega,\nu)$ and a bilateral filtration $\mathcal{F}=\{\mathcal{F}_k\}_{k\in\mathbb{Z}}$, its associated martingale BMO space is the space of $\nu$-measurable functions $f$ with norm
$$
\|f\|_{\mathrm{BMO}} := \sup_{k \in \mathbb{Z}} \left\|\mathsf{E}_{\mathcal{F}_k} \left|f-\mathsf{E}_{\mathcal{F}_{k-1}}f \right|^2 \right\|_{L_\infty(\nu)}^{\frac12}.
$$ 
The definition of the martingale BMO norm and the classical one are somewhat different since in the martingale case we subtract $\mathsf{E}_{\mathcal{F}_{k-1}} f$ instead of $\mathsf{E}_{\mathcal{F}_k} f$. This turns out to be very relevant because it is what ensures that the martingale BMO spaces interpolate regardless of the regularity of the filtration $\mathcal{F}$. In case one subtracted $\mathsf{E}_{\mathcal{F}_k} f$ in the definition, the resulting space, denoted as $\mathrm{bmo}$, has different interpolation properties in the general case, see \cite{condealonso-mei-parcet,garsia1973} for more details and background on martingale BMO spaces. In our case $\mathcal{F}=\Sigma$ is an atomic filtration, and therefore we have the expression
\begin{eqnarray*}
\|f\|_{\mathrm{RBMO}_\Sigma} & = & \sup_{k \in \mathbb{Z}} \left\| \mathsf{E}_{\Sigma_k} \left|f-\mathsf{E}_{\Sigma_{k-1}}f \right|^2 \right\|_{L_\infty(\mu)}^{\frac12} \\
& \sim & \sup_{k \in \mathbb{Z}} \left\| \mathsf{E}_{\Sigma_k} \left|f-\mathsf{E}_{\Sigma_{k-1}}f \right| \right\|_{L_\infty(\mu)} \\
& = & \sup_{Q \in \Pi(\Sigma)} \frac{1}{\mu(Q)} \int_Q \left|f - \langle f \rangle_{\widehat{Q}} \right| d\mu. \\
\end{eqnarray*}
The equivalence of norms in the second step above follows from the John-Nirenberg inequality, which holds for all martingale BMO spaces (see below), and hence for $\mathrm{RBMO}_{\Sigma}(\mu)$. We have arrived to an expression which we can compare to another one for the $\mathrm{RBMO}$ norm of Tolsa, as we shall see now. Given two balls $B_1,B_2$ in $\mathbb{R}^d$, we define 
$$
K_{B_1,B_2}:= 1+ \sum_{j=0}^{N_{B_1,B_2}} \frac{\mu(2^{j}B_1)}{r(2^jB_1)^n},
$$
where $N_{Q,R}$ is the smallest positive integer $\ell$ such that $B_2\subset 2^{\ell}B_1$. According to lemma 2.10 (c) of \cite{tolsaRBMO} and the fact that the definition of the space does not change if we use balls instead of cubes, a function $f$ belongs to $\mathrm{RBMO}(\mu)$ if and only if the following quantity is finite:
$$
\|f\|_{\mathrm{RBMO}(\mu)} = \max\left\{\|f\|_{*}, \|f\|_{\mathrm{d}} \right\} <\infty,
$$ 
where
\begin{eqnarray*}
\|f\|_{*} & = & \sup_{\begin{subarray}{c}  B \; (2,\beta)-\mathrm{doubling} \\ B \; \mathrm{ball} \end{subarray}} \frac{1}{\mu(B)} \int_B \left|f- \langle f \rangle_B \right| \; d\mu, \\
\|f\|_{\mathrm{d}} & = & \sup_{\begin{subarray}{c} B_1,B_2 \; (2,\beta)-\mathrm{doubling} \\ B_1 \subset B_2\end{subarray}} \frac{\left|\langle f \rangle_{B_1} -\langle f\rangle_{B_2} \right|}{K_{B_1,B_2}}. \\
\end{eqnarray*}

Let us now justify the first item in theorem \ref{RBMO.theoremB}. We want to show that
$$
\mathrm{RBMO}(\mu) \subset \mathrm{RBMO}_{\Sigma}(\mu),
$$
that is, 
$$
\|f\|_{\mathrm{RBMO}_{\Sigma}(\mu)} \lesssim \|f\|_{\mathrm{RBMO}(\mu)}
$$
for all functions $f \in \mathrm{RBMO}(\mu)$. The proof is just a computation for the norm in $\mathrm{RBMO}_{\Sigma}(\mu)$. Fix some $k\in \mathbb{Z}$ and $Q \in \Pi(\Sigma_k)$ and split into pieces as follows:
\begin{eqnarray*}
\dashint_Q \left|f -\langle f \rangle_{\widehat{Q}} \right| d\mu & \leq & \dashint_Q \left|f -\langle f \rangle_{Q} \right| d\mu + \left|\langle f \rangle_Q - \langle f \rangle_{\widehat{Q}} \right|\\
 & \leq & 3\dashint_Q \left|f -\langle f \rangle_{28B_Q} \right| d\mu + \dashint_{\widehat{Q}} \left|f -\langle f \rangle_{28B_{\widehat{Q}}} \right| d\mu + \left|\langle f \rangle_{28B_Q} - \langle f \rangle_{28B_{\widehat{Q}}} \right| \\
 & = & \mathrm{I} + \mathrm{II} + \mathrm{III}.\\
 \end{eqnarray*}
Now, since $B_Q \subset Q \subset 28B_Q$ and we have chosen $\alpha$ so that $B_Q$ is $(\alpha,\beta)$-doubling (which in particular implies that $28B_Q$ is $(2,\beta)$-doubling), we can estimate
$$
\mathrm{I} \leq 3 \frac{1}{\mu(Q)} \int_{28B_Q} \left| f - \langle f \rangle_{28B_Q} \right| d\mu \lesssim \dashint_{28B_Q} \left| f - \langle f \rangle_{28B_Q} \right| d\mu \leq \|f\|_{\mathrm{RBMO}(\mu)},
$$
and similarly for $\mathrm{II}$. We are left with $\mathrm{III}$. On the one hand, we have
$$
B_Q \subset Q \subset \widehat{Q} \subset 28B_{\widehat{Q}},
$$ 
and therefore $28B_Q \subset 28^2B_{\widehat{Q}}$. So, we can write
$$
\left|\langle f \rangle_{28B_Q} - \langle f \rangle_{28B_{\widehat{Q}}} \right| \leq \left|\langle f \rangle_{28B_Q} - \langle f \rangle_{28^2B_{\widehat{Q}}} \right| + \left|\langle f \rangle_{28^2B_{\widehat{Q}}} - \langle f \rangle_{28B_{\widehat{Q}}} \right|.
$$
We have that the three balls $28B_Q$, $28B_{\widehat{Q}}$ and $28^2B_{\widehat{Q}}$ are $(2,\beta)$-doubling. By definition, $K_{28B_{\widehat{Q}},28^2B_{\widehat{Q}}} \lesssim 1$, which implies
$$
\left|\langle f \rangle_{28^2B_{\widehat{Q}} }- \langle f \rangle_{28B_{\widehat{Q}}} \right| \sim \frac{\left|\langle f \rangle_{28^2B_{\widehat{Q}}} - \langle f \rangle_{28B_{\widehat{Q}}} \right|}{K_{28B_{\widehat{Q}},28^2B_{\widehat{Q}}} } \leq \|f\|_{\mathrm{RBMO}(\mu)}.
$$
Finally, one can check (see again \cite{tolsaRBMO}) that
$$
K_{B_1,B_2} \sim 1 + \int_{r(B_1) \leq |y-x_{B_1}| \leq r(B_2)} \frac{1}{|y-x_{B_1}|^n} d\mu(y).
$$
Therefore, we can apply item (iv) of theorem \ref{RBMO.theoremA} to the cubes $Q$ and $\widehat{Q}$ with $x=x_{Q}$ to obtain
$$
K_{28B_{Q},28^2B_{\widehat{Q}}} \lesssim 1 + \int_{\alpha B_{\widehat{Q}} \setminus 56B_Q} \frac{1}{|x_{Q}-y|^n}d\mu(y) \lesssim 1,
$$
which also implies that 
$$
\left|\langle f \rangle_{28B_Q} - \langle f \rangle_{28^2B_{\widehat{Q}}} \right| \lesssim \|f\|_{\mathrm{RBMO}(\mu)}.
$$
The above argument provides an interpretation of the geometric coefficients $K_{B_1,B_2}$ in terms of the martingale structure that we have built. In particular, one can see that if $Q \subset R$ are such that $Q \in \Pi(\Sigma_{k_1})$ and $R \in \Pi(\Sigma_{k_2})$ then $K_{B_Q,B_R} \sim k_1 - k_2$. This justifies why we view item (iv) of theorem \ref{RBMO.theoremA} as some sort of regularity of $\Sigma$.

\begin{remark}
The inclusion $\mathrm{RBMO}(\mu) \subset \mathrm{RBMO}_\Sigma(\mu)$ can be |and generally will be| strict. An instance of this can be seen by taking the example of the Gaussian measure in $\mathbb{R}$ at the end of the previous section and considering the function $f(x) = e^{|x|^2} \log(|x|)$, see also \cite{condealonso-mei-parcet} for a similar example.
\end{remark}

Notice that our definition of $\mathrm{RBMO}_\Sigma(\mu)$ is somewhat simpler than that of $\mathrm{RBMO}(\mu)$. However, as we have just seen it is at least as powerful. However, the use of a martingale BMO norm |in which we subtract $\mathsf{E}_{\Sigma_{k-1}}$ when we average over atoms of $\Sigma_k$| yields the rest of the good properties claimed in the statement of theorem \ref{RBMO.theoremB} essentially for free. Indeed, the John-Nirenberg property for martingale spaces (see \cite{garsia1973} for the proof), that we already used, implies that
$$
\|f\|_{\mathrm{RBMO}_{\Sigma}(\mu)} \sim_p \sup_{k \in \mathbb{Z}} \left\| \mathsf{E}_{\Sigma_k} \left|f-\mathsf{E}_{\Sigma_{k-1}}f \right|^p \right\|_{\infty}^{\frac1p}, \; 1\leq p<\infty.
$$
We now focus our attention on the predual of $\mathrm{RBMO}_{\Sigma}(\mu)$, which will be denoted by $\mathrm{H}_{\Sigma}^1(\mu)$. Since $\mathrm{RBMO}_{\Sigma}(\mu)$ is a martingale $\mathrm{BMO}$ space, its predual is the subspace of $L_1(\mu)$ functions with norm given by
$$
\|f\|_{\mathrm{H}_{\Sigma}^{1}(\mu)} = \left\|\left(\sum_{k\in \mathbb{Z}} |\mathsf{D}_{k}f|^2 \right)^{\frac12} \right\|_{L_1(\mu)}.
$$
Here $\mathsf{D}_k = \mathsf{E}_{\Sigma_k} - \mathsf{E}_{\Sigma_{k-1}}$ is the $k$-th martingale difference operator. This is the standard expression of the norm in martingale $\mathrm{H}_1$ spaces. On the other hand, in \cite{tolsaRBMO} the predual of $\mathrm{RBMO}(\mu)$ was described as a space of functions decomposable into atomic blocks, a generalization of the classical atoms that span the usual Euclidean $\mathrm{H}_1$ space. A connection between the two worlds is again given by martingale theory: as was shown in \cite{condealonso-parcet}, one can find an atomic block decomposition for functions in any martingale $\mathrm{H}_1$ space. Let us briefly describe it in the setting of $\mathrm{H}_{\Sigma}^1(\mu)$: a function $b$ is said to be a $p$-atomic block, $1< p \leq \infty$, if the following conditions are satisfied:
\begin{itemize}
\item There exists some $k\in \mathbb{Z}$ such that $\mathsf{E}_{\Sigma_k}b = 0$.
\item $b= \sum_j \lambda_j a_j$, where $\lambda_j$ are scalars and $a_j$ are $L_p$ functions such that
\begin{enumerate}
\item $\mathrm{supp}(a_j) \subset A_j \in \Sigma_{k_j}, \; k_j \geq k$.
\item $\|a_j\|_{L_p(\mu)} \leq \mu(A_j)^{-1/p'} (k_j-k+1)^{-1}$.
\end{enumerate}
\end{itemize} 
To each $p$-atomic block we attach the quantity
$$
|b|_{\mathrm{H}_{\Sigma}^{1}(\mu)} = \sum_j |\lambda_j|.
$$
Finally, one has the following alternative expression for the norm in $\mathrm{H}_{\Sigma}^{1}(\mu)$:
$$
\|f\|_{\mathrm{H}_{\Sigma}^{1}(\mu)} \sim_p \inf_{\begin{subarray}{c} f=\sum_i b_i = \sum_{i,j} \lambda_{ij} a_{ij} \\ b_i \; p-\mathrm{atomic} \; \mathrm{blocks} \end{subarray}} \sum_{i,j} |\lambda_{ij}|.
$$
The above expression is very similar to the one for the norm of the predual of $\mathrm{RBMO}(\mu)$, although ours yields a greater quantity. This is proven by the inclusion $\mathrm{RBMO}(\mu) \subset \mathrm{RBMO}_\Sigma(\mu)$ and duality. However, it is also easy to find a direct proof, whose details are left to the interested reader. Finally, interpolation holds for $\mathrm{RBMO}_\Sigma(\mu)$ because it holds for any martingale BMO space, see \cite{garsia1973}. This concludes the proof of theorem \ref{RBMO.theoremB}.
\

\

\
A consequence of the inclusion $\mathrm{RBMO}(\mu) \subset \mathrm{RBMO}_\Sigma(\mu)$ is  that $\mathrm{RBMO}(\mu)$ interpolates as a function space. This is something that was not achieved in \cite{tolsaRBMO}, where only interpolation of operators was considered.

\begin{corollary}\label{RBMO.interpolationspaces}
$\mathrm{RBMO}_\Sigma(\mu)$ serves as an interpolation endpoint with respect to the $L_p(\mu)$ scale. That is, we have
$$
\left[\mathrm{RBMO}(\mu),L_1(\mu)\right]_{1/p} \simeq L_p(\mu),
$$
with equivalent norms.
\end{corollary}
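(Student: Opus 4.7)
The plan is to sandwich $[\mathrm{RBMO}(\mu), L_1(\mu)]_{1/p}$ between two copies of $L_p(\mu)$ by applying the monotonicity of the complex interpolation functor to two continuous inclusions of the endpoint spaces, leveraging the content of theorem \ref{RBMO.theoremB}. No new geometric analysis is required; the corollary is essentially a bookkeeping consequence of the results already established.

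For the inclusion $[\mathrm{RBMO}(\mu), L_1(\mu)]_{1/p} \hookrightarrow L_p(\mu)$ I would invoke part (i) of theorem \ref{RBMO.theoremB}, namely $\mathrm{RBMO}(\mu) \hookrightarrow \mathrm{RBMO}_\Sigma(\mu)$. Paired with the identity on $L_1(\mu)$, functoriality of $[\cdot,\cdot]_{1/p}$ gives a continuous embedding into $[\mathrm{RBMO}_\Sigma(\mu), L_1(\mu)]_{1/p}$, which part (iii) of the same theorem identifies with $L_p(\mu)$.

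For the reverse inclusion $L_p(\mu) \hookrightarrow [\mathrm{RBMO}(\mu), L_1(\mu)]_{1/p}$ I would first record the elementary continuous embedding $L_\infty(\mu) \hookrightarrow \mathrm{RBMO}(\mu)$, which follows directly from the formulas for $\|\cdot\|_*$ and $\|\cdot\|_{\mathrm{d}}$: one has $|f - \langle f\rangle_B| \le 2\|f\|_{L_\infty}$ pointwise, and $K_{B_1,B_2} \ge 1$, so $\|f\|_{\mathrm{RBMO}(\mu)} \le 2\|f\|_{L_\infty(\mu)}$. Applying functoriality once more, together with the classical Riesz-Thorin identity $[L_\infty(\mu), L_1(\mu)]_{1/p} = L_p(\mu)$, closes the argument.

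The main obstacle is really none at the level of this corollary: all of the substantive work is packaged into theorem \ref{RBMO.theoremB}, specifically part (i), whose proof relies in turn on the refined David-Mattila-type filtration $\Sigma$ from theorem \ref{RBMO.theoremA}. The historical difficulty with interpolating Tolsa's $\mathrm{RBMO}(\mu)$ as a function space is precisely that the direct martingale-BMO interpolation argument is unavailable without first embedding $\mathrm{RBMO}(\mu)$ into a space \emph{like} $\mathrm{RBMO}_\Sigma(\mu)$, in which one subtracts $\mathsf{E}_{\Sigma_{k-1}}$ rather than $\mathsf{E}_{\Sigma_k}$; this is exactly what theorem \ref{RBMO.theoremB}(i) supplies.
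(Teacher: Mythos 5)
Your proposal is correct and follows essentially the same sandwich argument as the paper: the embeddings $L_\infty(\mu)\hookrightarrow\mathrm{RBMO}(\mu)\hookrightarrow\mathrm{RBMO}_\Sigma(\mu)$, functoriality of complex interpolation, and the endpoint identifications $[L_\infty,L_1]_{1/p}=L_p=[\mathrm{RBMO}_\Sigma,L_1]_{1/p}$. The only (welcome) difference is that you spell out the elementary verification of $L_\infty(\mu)\hookrightarrow\mathrm{RBMO}(\mu)$, which the paper leaves implicit.
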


\begin{proof}

Indeed, since martingale BMO spaces serve as interpolation endpoints, we have
\begin{eqnarray*}
L_p(\mu) & = & \left[L_{\infty}(\mu),L_1(\mu)\right]_{1/p} \\ 
& \subseteq & \left[\mathrm{RBMO}(\mu),L_1(\mu)\right]_{1/p} \\
& \subseteq &  \left[\mathrm{RBMO}_{\Sigma}(\mu),L_1(\mu)\right]_{1/p} \simeq L_p(\mu).
\end{eqnarray*}
\end{proof}

\begin{remark} \label{RBMO.metricspaces}
Theorem \ref{RBMO.theoremA} holds true in the setting of geometrically doubling metric spaces. Therefore, all our results up to now generalize in a natural way to the context of geometrically doubling metric spaces equipped with an upper doubling measure (see \cite{hytonenRBMO} for details).
\end{remark}


\section{\bf Calder\'on-Zygmund operators} \label{RBMO.section3}

We now consider applications to Calder\'on-Zygmund theory of our results in the previous sections. For us, a ($n$-dimensional) Calder\'on-Zygmund operator $T$ will be a linear operator which is bounded on $L_2(\mu)$ with an associated kernel $k$ for which the representation
$$
Tf(x) = \int k(x,y) f(y) d\mu(y)
$$
holds for $x$ away from the support of sufficiently nice $f$. Additionally, the kernel $k$ is assumed to satisfy the standard size condition and Lipschitz smoothness:
\begin{equation*}
|k(x,y)| \lesssim \frac{1}{|x-y|^n} \;\mathrm{when}\; \; x\not=y, 
\end{equation*}
\begin{equation*}
|k(x,y)-k(x',y)| + |k(y,x)-k(y,x')| \lesssim \frac{1}{|x-y|^n}\frac{|x-x'|^\gamma}{|x-y|^{\gamma}} \;\mathrm{when}\; |x-x'| \leq \frac12 |x-y|.
\end{equation*}
Under these conditions, we know from \cite{tolsaRBMO} that $T$ maps $L_\infty(\mu)$ into $\mathrm{RBMO(\mu)}$ and therefore $T:L_\infty(\mu) \to \mathrm{RBMO}_\Sigma(\mu)$, by the inclusion $\mathrm{RBMO}(\mu) \subset \mathrm{RBMO}_\Sigma(\mu)$. A direct proof is also possible, but we will postpone it until the next section, in which we will consider a more general setting that includes this |and, in fact, one in which Tolsa's RBMO space is not defined|.
\

\

\
We shall now focus on the proof of theorem \ref{RBMO.theoremC}. Our starting point is a result of H\"anninen (see \cite{hanninen2015}) which is a generalization of the so-called Lerner's formula in \cite{lerner2010}. To state it, we need to introduce some terminology: a family of measurable sets $\mathcal{S}$ is called $\eta$-sparse (or only sparse if $\eta=1/2$) if for each $A\in \mathcal{S}$, there exists a measurable $E_A \subset A$ with two properties:
\begin{itemize}
\item $\mu(E_A) \geq \eta \mu(A)$.
\item For each pair $A,B \in \mathcal{S}$ with $A\not=B$, $E_A \cap E_B = \emptyset$.
\end{itemize}
The $\lambda$-oscillation of a function $f$ on a set $A$, denoted $\omega_\lambda(f;A)$, is defined as
$$
\omega_\lambda(f;A) = \inf_{\begin{subarray}{c} A' \subset A \\ \mu(A') \geq \lambda \mu(A) \end{subarray}} \sup_{x,y \in A'} \left|f(x)-f(y) \right| .
$$
Finally, a median of a function $f$ on a set $A$ is a (possibly non unique) number $m_A(f)$ which satisfies
$$
\max\left\{\mu\left(A \cap \{f > m_A(f)\} \right),\mu\left(A \cap \{f < m_A(f)\} \right)\right\}\leq \frac{1}{2} \mu(A).
$$
We can now restate H\"anninen's theorem in the way that we intend to use it: fix some set $Q_0 \in \Pi(\Sigma_k)$ for some $k \in \mathbb{Z}$. Then there exists a sparse family $\mathcal{S}$ of sets in $\Pi(\Sigma)$ such that
\begin{equation} \label{RBMO.hanninen}
|f-m_{Q_0}(f)| \chi_{Q_0} \lesssim_{\lambda} \sum_{Q \in \mathcal{S}} \left(\omega_{\lambda}(f;Q) +\left|m_Q(f)-m_{\widehat{Q}}(f) \right|\right) \chi_Q,
\end{equation}
for any measurable $f$ which is supported on $Q_0$. In \cite{hanninen2015}, \eqref{RBMO.hanninen} is stated for a filtration of $\sigma$-algebras of dyadic cubes on $\mathbb{R}^d$. However, its proof works for any filtration of atomic $\sigma$-algebras, so we can use the result with the filtration $\Sigma$. We can use this version of Lerner's oscillation formula to prove theorem \ref{RBMO.theoremC}.

\begin{proof}[Proof of theorem \ref{RBMO.theoremC}]
Fix a function $f$, and assume qualitatively that it belongs to $L_1(\mu)$ and has compact support contained in $Q_0 \in \Pi(\Sigma)$.  We apply \eqref{RBMO.hanninen} to $Tf$ to obtain
\begin{equation}\label{RBMO.equation32}
|Tf(x)-m_{Q_0}(Tf)| \chi_{Q_0}(x) \lesssim \sum_{Q \in \mathcal{S}} \left(\omega_{\lambda}(Tf;Q) +\left|m_Q(Tf)-m_{\widehat{Q}}(Tf) \right|\right) \chi_Q(x).
\end{equation}
We now estimate each of the terms in \eqref{RBMO.equation32} separately. On the one hand, we may decompose $Tf = T(f\chi_{56B_Q}) + T(\chi_{(56B_Q)^c})=: Tf_1 + Tf_2$. Then, by the weak $(1,1)$ boundedness of $T$ (\cite{ntvWeakType,tolsaWeakType}), we get that 
$$
\mu \left(\left\{x\in Q: |T(f_1)|>  C_1 \mu(Q)^{-1} \int |f_1|d\mu \right\} \right) \leq \|T\|_{L_1(\mu) \to L_{1,\infty}(\mu)} C_1^{-1} \mu(Q). 
$$
Therefore, choosing $C_1=C_1(\mu,\|T\|_{L_1(\mu) \to L_{1,\infty}(\mu)} )$ appropriately, we get 
$$
\mu\left(\left\{x\in Q: |T(f_1)|> C_1 \mu(Q)^{-1} \int |f_1|d\mu\right\}\right) \leq \frac{1}{4} \mu(Q)
$$
(say) and therefore
$$
\omega_{\lambda} (Tf_1;Q) \lesssim \frac{1}{\mu(\alpha B_Q)} \int_{56B_Q}|f|\; d\mu
$$
(recall that $B_Q$ is doubling). On the other hand, if $x,y \in Q$, we have
\begin{eqnarray*}
|Tf_2(x) -Tf_2(y)| & \leq & \sum_{j=0}^{\infty} \int_{56B_{Q^{(j+1)}}\setminus 56B_{Q^{(j)}}} |k(x,z) - k(y,z)| |f(z)| d\mu(z) \\
& \lesssim & \sum_{j=0}^{\infty} A^{-\gamma j} \frac{1}{r(B_{Q^{(j)}})^n} \int_{56B_{Q^{(j+1)}}} |f(z)| d\mu(z) \\
& \lesssim & \sup_{j} \frac{1}{\mu(\alpha B_{Q^{(j+1)}})} \int_{56B_{Q^{(j+1)}}} |f| \; d\mu. \\
\end{eqnarray*}
Therefore, we arrive at 
$$
\omega_{\lambda} (Tf,Q) \lesssim \inf_{x\in Q} \tilde{M}_{\mathscr{D}} f(x)
$$
for some $1/4 < \lambda < 1/2$. We are left with the median term, for which we use the notation $m_{Q,x} (f(x,y))$ for the median in the $x$ variable of the two variable function $f(x,y)$. Using the monotonicity and linearity (for constants!) of the median we get
\begin{eqnarray*}
|m_Q(Tf) - m_{\widehat{Q}}(Tf)| & = & |m_{Q,x}(m_{\widehat{Q},y}(Tf(x) -Tf(y)))| \\
& \leq & |m_Q(T(f\chi_{56B_Q}))| +|m_{\widehat{Q}}(T(f\chi_{56B_{\widehat{Q}}}))| + \sup_{x\in Q } |T(f\chi_{56B_{\widehat{Q}} \setminus 56B_{Q}})(x)| \\
& + & \sup_{x,y \in Q} \sum_{j=1}^{\infty} \int_{56B_{\widehat{Q}^{(j+1)}}\setminus 56B_{\widehat{Q}^{(j)}}} |k(x,z) - k(y,z)| |f(z)| d\mu(z) \\
& =: & \mathrm{I} + \mathrm{II} + \mathrm{III} + \mathrm{IV}. \\ 
\end{eqnarray*}
For $\mathrm{I}$ and $\mathrm{II}$ we just use again that $B_Q$ and $B_{\widehat{Q}}$ are doubling and the weak type $(1,1)$ of $T$ to get
$$
\mathrm{I} + \mathrm{II} \lesssim \frac{1}{\mu(\alpha B_Q)} \int_{56B_Q} |f| \; d\mu + \frac{1}{\mu(\alpha B_{\widehat{Q}})} \int_{56B_{\widehat{Q}}} |f| \; d\mu. 
$$
The term $\mathrm{III}$ is estimated arguing as in the proof of the inclusion $\mathrm{RBMO}(\mu) \subset \mathrm{RBMO}_\Sigma(\mu)$. If $x\in Q$, then
\begin{eqnarray*}
|T(f\chi_{56B_{\widehat{Q}} \setminus 56B_{Q}})(x)| & \leq & \sum_{\begin{subarray}{c} j\geq 0 \\ Q^{(j)}\subset \widehat{Q}\end{subarray}} \int_{56B_{Q^{(j+1)}} \setminus 56B_{Q^{(j)}}} |k(x,y)| |f(y)| d\mu(y) \\
& \lesssim & \sum_{\begin{subarray}{c} j\geq 0 \\ Q^{(j)}\subset \widehat{Q}\end{subarray}} \frac{1}{r(B_{Q^{(j)}})^n} \int_{56B_{Q^{(j+1)}}} |f| d\mu \\
& \lesssim & \left( \sup_j \frac{1}{\mu(\alpha B_{Q^{(j+1)}})} \int_{56B_{Q^{(j+1)}}} |f| d\mu \right) \sum_{\begin{subarray}{c} j\geq 0 \\ Q^{(j)}\subset \widehat{Q}\end{subarray}} \frac{\mu(\alpha B_{Q^{(j+1)}})}{r(\alpha B_{Q^{(j+1)}})^n} \\
& \lesssim & \inf_{y \in Q} \tilde{M}_{\mathscr{D}} f(y), \\
\end{eqnarray*}
where we define
$$
\tilde{M}_{\mathscr{D}} f(x) = \sup_{x\in Q \in \mathscr{D}} \frac{1}{\mu(\alpha B_Q)} \int_{56B_Q} |f| \; d\mu.
$$
In the last step above we have used \eqref{RBMO.keyproperty} as in the proof of theorem \ref{RBMO.theoremA}. Finally, the term $\mathrm{IV}$ is smaller than one of the terms that already appeared in the estimate of the oscillation. We also have to deal with the term $m_{Q_0}(Tf)$, but this is done in the usual way: since $f$ is supported on $Q_0$,  we have
$$
m_{Q_0}(Tf) \leq 2\frac{\|Tf\|_{L_{1,\infty}(\mu)}}{\mu(Q_0)} \lesssim \langle |f|\rangle_{Q_0},
$$
so we can add the resulting quantity to the right hand side of the estimate via the triangle inequality. This may change the sparseness constant of the family $\mathcal{S}$ that we have enlarged with the addition of $Q_0$ from $\frac12$ to $\frac14$. The proof of the pointwise estimate is completed. We can therefore shift our attention to the weighted estimate. We say that a nonnegative locally integrable $w$ which is positive a.e. is an $A_2(\mu)$ weight if
$$
[w]_{A_2(\mu)} := \sup_{Q \; (\alpha',\beta')-\mathrm{doubling}} \frac{w(Q)}{\mu(Q)}\frac{w^{-1}(Q)}{\mu(Q)}.
$$
In the formula above, we denote as usual $w(Q)=\int_Q w \; d\mu$ and $w^{-1}(Q) = \int_Q 1/w \; d\mu$, and the pair $\alpha', \beta' > (\alpha')^d$ is fixed. The sets $Q$ may be Euclidean balls or cubes with sides parallel to the axes. It does not matter which of them we use, up to a change in the associated doublingness constants. We have
$$
|Tf(x)| \lesssim \sum_{Q \in \mathcal{S}} \langle \tilde{M}_{\mathscr{D}} f \rangle_{56B_Q} \chi_Q (x) =: \tilde{\mathcal{A}}_{\mathcal{S}} \left(\tilde{M}_{\mathscr{D}} f \right)(x).
$$
To control the maximal term, we start with a pointwise estimate. We are going to use a dyadic covering lemma like, say, the one in \cite{conde2013}. This means that there exist $d+1$ usual dyadic systems denoted by $\tilde{\mathscr{D}}^j$, $0\leq j \leq d$, such that for any ball $B$, we can find a dyadic cube $Q' \in \tilde{\mathscr{D}}^j$ for some $j$ such that 
$$
B \subset Q' \subset c_d B.
$$ 
Assume $\alpha$ has been chosen so big that
$$
56B_Q \subset Q' \subset \frac{\alpha}{2^d} B_Q.
$$ 
This implies that $Q'$ is $(2,\beta)$-doubling. Then, given $x\in Q$ and its associated $Q' \in \tilde{\mathscr{D}}^j$ for some $j$, we can compute
\begin{eqnarray*}
\frac{1}{\mu(\alpha B_Q)}\int_{56B_Q} |f| \; d\mu & = & \left(\frac{w(Q')}{\mu(\alpha B_Q)} \frac{w^{-1}(Q')}{\mu(Q')} \right) \frac{\mu(Q')}{w(Q')} \frac{1}{w^{-1}(Q')} \int_{Q'} (|f|w) \; w^{-1}d\mu \\
& \leq & [w]_{A_2(\mu)} \frac{\mu(Q')}{w(Q')} \inf_{y\in Q'} M_{\tilde{\mathscr{D}}^j}^{w^{-1}d\mu} (|f|w)(y) \\
& \leq & [w]_{A_2(\mu)} \frac{1}{w(Q')} \int_{Q'} M_{\tilde{\mathscr{D}}^j}^{w^{-1}d\mu} (|f|w) \; d\mu \\
& \leq & [w]_{A_2(\mu)} \inf_{y \in Q'} M_{\tilde{\mathscr{D}}^j}^{wd\mu} \left(M_{\tilde{\mathscr{D}}^j}^{w^{-1}d\mu} (|f|w)w^{-1}\right)(y) \\
& \leq & [w]_{A_2(\mu)} M_{\tilde{\mathscr{D}}^j}^{wd\mu} \left(M_{\tilde{\mathscr{D}}^j}^{w^{-1}d\mu} (|f|w)w^{-1}\right)(x),\\
\end{eqnarray*}
where 
$$
M_{\tilde{\mathscr{D}}^j}^{\nu} f(x)= \sup_{x\in Q \in \tilde{\mathscr{D}}^j} \nu(Q)^{-1} \int_Q |f| \; d\nu.
$$
Now, since $M_{\tilde{\mathscr{D}}^j}^{\nu}$ is bounded on $L_2(\nu)$ with norm independent of $\nu$, we get
\begin{eqnarray*}
\|\tilde{M}_{\mathscr{D}} f\|_{L_2(wd\mu)} & \leq & [w]_{A_2(\mu)} \sum_{j=0}^d \left\| M_{\tilde{\mathscr{D}}^j}^{wd\mu} \left(M_{\tilde{\mathscr{D}}^j}^{w^{-1}d\mu} (|f|w)w^{-1}\right) \right\|_{L_2(wd\mu)} \\
& \lesssim & [w]_{A_2(\mu)} \max_{0\leq j \leq d} \left\| M_{\tilde{\mathscr{D}}^j}^{w^{-1}d\mu} (|f|w) \right\|_{L_2(w^{-1}d\mu)} \\
& \lesssim & [w]_{A_2(\mu)} \| f\|_{L_2(wd\mu)}. \\
\end{eqnarray*}

\noindent
On the other hand, all $Q\in \mathcal{S}$ are doubling, which means that each ball $56B_Q$ is contained in a dyadic cube $Q'\in \tilde{\mathscr{D}}^j$ for some $j \in \{0,\ldots,d\}$ which is $(\alpha',\beta)$-doubling. The family of associated cubes $\{Q'\}_{Q \in \mathcal{S}}$ is then $\eta$-sparse, for some $\eta$ that only depends on $\alpha'$ and $d$. Then, we may apply the argument in \cite{cruz-uribe2012} (to $d+1$ $\eta$-sparse dyadic operators) to conclude
$$
\|\mathcal{A}_{\mathcal{S}}\|_{L_2(wd\mu)\to L_2(wd\mu)} \lesssim [w]_{A_2(\mu)},
$$
which yields the desired result.
\end{proof}

\begin{remark}
It may be that the bound that we have found for the norm of $T$ in terms of $[w]_{A_2(\mu)}$ is not the best possible. Definitely, it is not for the Lebesgue measure, and in that case our method is not optimal. In any case one may wonder, in view of the results in \cite{tolsaWeighted}, if the quantity $[w]_{A_2}$ is the most natural one to study the dependence on the weight in the nondoubling setting.
\end{remark}

We end this section with a discussion on Calder\'on-Zygmund decompositions for general measures. The classical decomposition is a tool that allows to decompose an integrable function into pieces that are well suited for Calder\'on-Zygmund estimates, yielding the weak type $(1,1)$ inequality for $L_2$-bounded Calder\'on-Zygmund operators. The same decomposition can be used to study the weak type $(1,1)$ boundedness of dyadic models like Haar shifts. However, when the underlying measure is nondoubling, as in the setting of this paper, singular integrals and their dyadic models require different tools adapted to their particular structure. On the one hand, since centered dilations of balls/cubes are important for Calder\'on-Zygmund operators |to ensure that one goes far away from the diagonal in kernel estimates| Tolsa's decomposition (see \cite{tolsaWeakType}) has a centered nature, and dyadic systems do not play any role in it. On the other hand, the problem with the study of Haar shifts associated to a dyadic lattice $\tilde{\mathscr{D}}$ |when the complexity is nonzero; otherwise, the problem was understood long ago via Gundy's martingale decomposition| is related to the interaction between cubes an their dyadic ancestors. The `right' decomposition in this case, found in \cite{lopezsanchez-martell-parcet2014}, reads as follows: given $f \in L_1(\mu)$ and a height $\lambda >0$, if $\{Q_j\}_j$ is the family of maximal dyadic cubes with respect to the property $\langle|f|\rangle_Q>\lambda$, then $f = g + b + \beta$, where 
$$
b = \sum_j b_j = \sum_j(f-\langle f \rangle_{Q_j}) \chi_{Q_j} \; \mathrm{and} \; \sum_j \|b_j\|_{L_1(\mu)} \lesssim \|f\|_{L_1(\mu)},
$$
$$
\beta = \sum_j \beta_j = \sum_j (\langle f \rangle_{Q_j} - \langle f \rangle_{Q_j^{(1)}}) \left(\chi_{Q_j} - \frac{\mu(Q_j)}{\mu(Q_j^{(1)})} \chi_{Q_j^{(1)}}\right); \;\mathrm{and}\;  \sum_j \|\beta_j\|_{L_1(\mu)} \lesssim \|f\|_{L_1(\mu)},
$$
$$
g = f - b - \beta, \; \|g\|_{L_2(\mu)} \lesssim \lambda \|f\|_1.
$$
Notice that the maximal cubes $Q_j$ need not be doubling. Interestingly, it turns out that the natural condition on the measure $\mu$ in this case is not the polynomial growth condition, but \emph{equilibration}. In the one-dimensional case, this means the following: if the brother of a cube $Q$ is $Q_{\mathrm{b}}$, define
$$
m(Q) = \frac{\mu(Q) \mu(Q_{\mathrm{b}})}{\mu(Q^{(1)})^2}.
$$
Then a locally finite Borel measure $\mu$ is equilibrated if $m(Q) \sim m(Q^{(1)})$ for all $Q \in \tilde{\mathscr{D}}$, a condition that is neither implied nor implies linear growth. As a consequence of this, Haar shift theory and Calder\'on-Zygmund theory are different when $\mu$ is nondoubling. However, if we substitute the dyadic system $\tilde{\mathscr{D}}$ by the filtration $\Sigma$, we obtain a simple unified approach to both theories/decompositions:

\begin{theorem}\label{RBMO.CZ}
Let $0\leq f\in L_1(\mu)$ and $\lambda>0$ (or $\lambda > \|f\|_{L_1(\mu)} / \|\mu\|$ if $\mu$ is finite). Consider the family of maximal sets of $\Pi(\Sigma)$ with respect to the property $\langle f \rangle_Q > \lambda$. We can write $f=g+\tilde{\beta}$, where
$$
\tilde{\beta} = \sum_{k\in \mathbb{Z}} \varphi_k = \sum_{k\in \mathbb{Z}} \sum_{\begin{subarray}{c} Q \in \Pi(\Sigma_k) \\ Q \; \mathrm{maximal} \end{subarray}} \left(f\chi_{Q} - \mathsf{E}_{\Sigma_{k-1}}\left[f\chi_{Q}\right] \right), \; g= f-\tilde{\beta},
$$
and we have $\sum_j \|\varphi_j\|_{L_1(\mu)} \lesssim \|f\|_{L_1(\mu)}$ and $\|g\|_{L_2(\mu)} \lesssim \lambda \|f\|_{L_1(\mu)}$.
\end{theorem}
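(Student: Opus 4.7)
The proof is a Calder\'on--Zygmund stopping-time argument adapted to the filtration $\Sigma$. The plan is to select a maximal family of stopping atoms inside $\Pi(\Sigma)$, rewrite $\tilde\beta$ as a sum of ``good'' martingale-type increments, and control the remainder $g=f-\tilde\beta$ in $L_2$. More precisely, I would begin by picking the family $\{Q_j\}\subset\Pi(\Sigma)$ of atoms that are maximal with respect to the condition $\langle f\rangle_Q>\lambda$. Property~(iv) of Theorem~\ref{RBMO.theoremA} together with the two-sided structure of the filtration guarantees existence: as one walks up the chain of ancestors of any point, the averages of $f$ eventually drop below $\lambda$ (because $\mu(P)\to\infty$ in the $\sigma$-finite case, or by the assumption $\lambda>\|f\|_1/\|\mu\|$ in the finite case). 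Maximality yields pairwise disjointness, the packing estimate $\sum_j\mu(Q_j)\le\lambda^{-1}\|f\|_1$, the crucial bound $\langle f\rangle_{\widehat{Q_j}}\le\lambda$ for every $j$, and (via the Lebesgue differentiation theorem for $\Sigma$ used in the justification of property~(ii) of Theorem~\ref{RBMO.theoremA}) $f\le\lambda$ almost everywhere on $G:=\mathrm{supp}(\mu)\setminus\bigcup_j Q_j$.

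\textbf{The $L_1$ estimate.}
Grouping by generation, $E_k:=\bigcup\{Q_j:Q_j\in\Pi(\Sigma_k)\}$, the sets $\{E_k\}_k$ are pairwise disjoint, $\bigcup_k E_k=\bigcup_j Q_j$, and a direct computation identifies $\varphi_k=f\chi_{E_k}-\mathsf{E}_{\Sigma_{k-1}}[f\chi_{E_k}]$. The triangle inequality together with the $L_1$-contractivity of the conditional expectation then gives $\|\varphi_k\|_{L_1(\mu)}\le 2\int_{E_k}f\,d\mu$, and summation in $k$ plus disjointness of the $E_k$'s produces $\sum_k\|\varphi_k\|_{L_1(\mu)}\le 2\|f\|_{L_1(\mu)}$, which is the first of the two claims.

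\textbf{The $L_2$ estimate and the main obstacle.}
Writing $\Phi_k:=\mathsf{E}_{\Sigma_{k-1}}[f\chi_{E_k}]$ one finds $g=f\chi_G+\sum_k\Phi_k$. The term $f\chi_G$ is pointwise bounded by $\lambda$ on its support, hence $\|f\chi_G\|_{L_2(\mu)}^2\le\lambda\|f\|_{L_1(\mu)}$. Each $\Phi_k$ is supported on the union $F_k$ of $\Sigma$-parents of the maximal atoms of generation $k$, and on that support $\Phi_k\le\lambda$ because $\langle f\rangle_{\widehat{Q_j}}\le\lambda$. Since $\Phi_k$ is $\Sigma_{k-1}$-measurable, the tower property yields, for $k\le l$, the orthogonality-type identity $\int\Phi_k\Phi_l\,d\mu=\int\Phi_k\cdot f\chi_{E_l}\,d\mu\le\lambda\int_{F_k\cap E_l}f\,d\mu$; expanding the square, summing diagonal and off-diagonal contributions, and using $\sum_k\|\Phi_k\|_{L_1(\mu)}\le\|f\|_{L_1(\mu)}$ leads to
$$\Bigl\|\sum_k\Phi_k\Bigr\|_{L_2(\mu)}^{2}\;\le\;\lambda\|f\|_{L_1(\mu)}+2\lambda\sum_k\int_{F_k\cap\bigcup_{l>k}E_l}f\,d\mu,$$
and the remaining double sum is controlled by fibering over the maximal atom containing a point $x\in E_l$ and exploiting that the annuli $P_{k-1}(x)\setminus P_k(x)$ along the ancestor chain are pairwise disjoint and carry the max siblings contributing to $\Phi_k(x)$. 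The main obstacle is precisely this last bound: unlike in the doubling setting, the irregularity of $\Sigma$ prevents a pointwise inequality $g\le C\lambda$, since a single point can lie in the supports $F_k$ of infinitely many $\Phi_k$'s; the $L_2$ control has to be extracted \emph{globally} from the martingale structure and the Carleson-type disjointness of the ancestor annuli.
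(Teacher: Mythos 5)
Your stopping construction and the $L_1$ bound for $\sum_k\|\varphi_k\|_1$ are correct and match what the paper (via the reference to theorem~A of \cite{lopezsanchez-martell-parcet2014}) has in mind. The diagonal piece of the $L_2$ estimate, $\sum_k\|\Phi_k\|_2^2\le\lambda\sum_k\|\Phi_k\|_1\le\lambda\|f\|_1$, is also fine, and you are right that $g$ need not be pointwise $\lesssim\lambda$.

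The gap is in the off-diagonal term, and it is not merely an omitted computation: the inequality you set up does not close. After bounding $\int\Phi_k\Phi_l\,d\mu\le\lambda\int_{F_k\cap E_l}f\,d\mu$ you are left to prove $\sum_k\int_{F_k\cap\bigcup_{l>k}E_l}f\,d\mu\lesssim\|f\|_1$, but this is \emph{false} in general. Take, for instance, a one-branch filtration with $L$ levels, $\mu(P_{k-1})=L+1-k$, each $P_{k-1}$ splitting into $P_k$ and a sibling $S_k$ of mass $1$, and $f=(1+1/L)\lambda\sum_{k=0}^{L-1}\chi_{S_k}$: then every $S_k$ is a stopping atom, $F_k=P_{k-1}$, and the quantity above equals roughly $\lambda L^2/2$ while $\|f\|_1\approx\lambda L$. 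Having thrown away $\Phi_k$ for the crude bound $\lambda$, you can no longer exploit the disjointness of the annuli $P_{k-1}(x)\setminus P_k(x)$, because what remains is a count of the levels $k$ at which $x$ lies in $F_k$, and that count is unbounded. The Carleson-type disjointness controls the total \emph{mass} carried by the siblings, not the sum of ratios $\mu(P_{k-1}(x))^{-1}\int_{S_k(x)}f\,d\mu$ with denominators of wildly different sizes.

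The fix is to never replace $\Phi_k$ by $\lambda$ on the left factor. Instead write $h=f\chi_{\cup_j Q_j}=\sum_k h_k$ and use self-adjointness of conditional expectation:
\begin{equation*}
\int\Big(\sum_k\Phi_k\Big)h\,d\mu=\sum_k\int\mathsf{E}_{\Sigma_{k-1}}[h_k]\,h\,d\mu=\sum_k\int h_k\,\mathsf{E}_{\Sigma_{k-1}}[h]\,d\mu\le\lambda\sum_k\int_{E_k}f\,d\mu\le\lambda\|f\|_1,
\end{equation*}
where the inequality uses that for $y\in E_k$, $\mathsf{E}_{\Sigma_{k-1}}[h](y)=\langle h\rangle_{\widehat{Q}}\le\langle f\rangle_{\widehat{Q}}\le\lambda$ by maximality. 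Combining with the observation that $\int\Phi_k h_l=0$ for $l<k$ (since a $\Sigma_{k-1}$-atom meeting a generation-$l$ stopping atom with $l\le k-1$ would have to sit inside it, contradicting maximality of a deeper stopping atom), one gets $\int(\sum\Phi_k)h\,d\mu=\sum_k\|\Phi_k\|_2^2+\sum_{k<l}\int\Phi_k\Phi_l\,d\mu$, hence $\|\sum_k\Phi_k\|_2^2=2\int(\sum\Phi_k)h\,d\mu-\sum_k\|\Phi_k\|_2^2\le2\lambda\|f\|_1$. This is the argument you need; your framing of the diagonal/off-diagonal split is compatible with it, but the step you sketched would not get you there.
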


The proof of theorem \ref{RBMO.CZ} is essentially the same than that of theorem A in \cite{lopezsanchez-martell-parcet2014}. However, the additional properties satisfied by the filtration $\Sigma$ yield
$$
T: L_1(\mu) \to L_{1,\infty}(\mu)
$$
for Calder\'on-Zygmund operators $T$, hence recovering Tolsa's result \cite{tolsaWeakType}. Also, if $\mu$ is equilibrated with respect to $\Sigma$, we can establish the same weak $L_1$ estimates for Haar shift operators |defined with respect to $\Sigma$|. The details are left to the reader.


\section{\bf Noncommutative $\mathrm{RBMO}$}
\label{RBMO.section4}

We now study the validity of our results of section \ref{RBMO.sectionRBMO} in the operator valued setting. Our models are the operator valued $\mathrm{BMO}$ in \cite{meiMemoirs} and the large $\mathrm{BMO}$ spaces in \cite{condealonso-mei-parcet}. Hence, assume that $\mathcal{M}$ is a |possibly noncommutative| von Neumann algebra equipped with a trace $\tau$. This is sometimes called a `noncommutative measure space', see \cite{pisier-xu} for a nice introduction of the subject. We consider the semicommutative von Neumann algebra $\mathcal{A}$ which is the von Neumann algebra tensor product of $(\mathcal{M},\tau)$ with $L_\infty(\mu)$, that is 
$$
(\mathcal{A},\varphi) = L_\infty(\mu) \overline{\otimes} (\mathcal{M},\tau).
$$
(see \cite{parcet2009} for details on a similar setting). This is both a von Neumann algebra and a vector valued $L_\infty$ space. The trace on $\mathcal{A}$ |the noncommutative integral| acts on functions $f:\mathbb{R}^d \to \mathcal{M}$ by
$$
\varphi(f) = \int d\mu \otimes \tau (f) = \int_{\mathbb{R}^d} \tau\left[f(x) \right] d\mu(x).
$$
The noncommutative $L_p$ scale associated to $\mathcal{A}$ is the family of spaces
$$
L_p(\mathcal{A}) := L_p(\mathbb{R}^d,\mu;L_p(\mathcal{M})), \; p\geq 1.
$$
Let us introduce our $\mathrm{RBMO}_\Sigma$ norm in this setting. Define the absolute value (for operators) by $|x|^2 = x^*x$. Notice that in the noncommutative setting, $|x| \not= |x^*|$. Given a $\tau$-measurable function $f:\mathbb{R}^d \to L_0(\mathcal{M})$ its norm in $\mathrm{RBMO}_\Sigma(\mathcal{A})$ is its (noncommutative) martingale $\mathrm{BMO}$ norm with respect to the filtration $\Sigma$:
$$
\|f\|_{\mathrm{RBMO}_\Sigma^c} = \sup_{k \in \mathbb{Z}} \left\| \mathsf{E}_{\Sigma_k} \left|f-\mathsf{E}_{\Sigma_{k-1}}f \right|^2 \right\|_{\mathcal{A}}^{\frac12}.
$$
We say that $f$ belongs to $\mathrm{RBMO}_\Sigma = \mathrm{RBMO}_\Sigma(\mu)$ if $\max\{\|f\|_{\mathrm{RBMO}_\Sigma^c},\|f^*\|_{\mathrm{RBMO}_\Sigma^c} \}<\infty$. Notice that our conditional expectations 
$$
\mathsf{E}_{\Sigma_k} : \mathcal{A} \to L_\infty(\mathbb{R}^d,\Sigma_k,\mu;\mathcal{M})
$$
are again operator valued: the average of a function $f$ over an atom in $\Pi(\Sigma)$ is still an element of $\mathcal{M}$. In any case, $\mathrm{RBMO}_\Sigma(\mathcal{A})$ is a noncommutative martingale $\mathrm{BMO}$ norm, and therefore we automatically have that the following nice properties hold:
\begin{itemize}
\item \textbf{John-Nirenberg inequality:} The noncommutative John-Nirenberg inequality was established in \cite{junge-musat,hong-meiJN}, and by the semicommutative nature of $\mathcal{A}$ it implies that
$$
\|f\|_{\mathrm{RBMO}_\Sigma^c(\mathcal{A})} \sim_p \sup_{k\in \mathbb{Z}}  \left\|\mathsf{E}_{\Sigma_k}\left|f - \mathsf{E}_{\Sigma_{k-1}}f \right|^p\right\|_{\mathcal{A}}^\frac1p.
$$
\item \textbf{Interpolation:} The noncommutative interpolation theorem with $\mathrm{BMO}$ (see \cite{musat}) gives the interpolation of $\mathrm{RBMO}_\Sigma(\mathcal{A})$:
$$
\left[\mathrm{RBMO}_\Sigma(\mathcal{A}), L_1(\mathcal{A}) \right]_{1/p} = L_p(\mathcal{A}).
$$
\item \textbf{Fefferman-Stein duality:} the predual of $\mathrm{RBMO}_\Sigma(\mathcal{A})$ is, as in the commutative case, the Hardy space $\mathrm{H}_1$ associated to the noncommutative dyadic square function. However, as shown in \cite{condealonso-parcet}, it also admits an atomic block decomposition very similar to that explained in section \ref{RBMO.sectionRBMO}.
\end{itemize}

To finish our study, let us consider Calder\'on-Zygmund $L_\infty-\mathrm{RBMO}$ estimates in the noncommutative setting. We start defining what we understand by a Calder\'on-Zygmund operator. We essentially follow \cite{condealonso-mei-parcet,junge-mei-parcetSmooth}. Consider kernels $k: (\mathbb{R}^d \times \mathbb{R}^d) \setminus \Delta \to \mathcal{L}(L_0(\mathcal{M}))$ defined away from the diagonal $\Delta $ of $\mathbb{R}^d \times \mathbb{R}^d$ and which take values in linear maps on $\tau$-measurable operators. The standard H\"ormander kernel condition takes the same form in this setting when we replace the absolute value by the norm in the algebra $\mathcal{B}(\mathcal{M})$ of bounded linear operators acting on $\mathcal{M}$:
\begin{equation}\label{RBMO.hormander}
\sup_{\begin{subarray}{c} \mathrm{B} \, \mathrm{ball} \\ z_1, z_2 \in \mathrm{B} \end{subarray}} \, \int_{\mathbb{R}^d \setminus \alpha \mathrm{B}}  \big\| k(z_1,x) - k(z_2,x) \big\|_{\mathcal{B}(\mathcal{M})} + \big\| k(x,z_1) - k(x,z_2) \big\|_{\mathcal{B}(\mathcal{M})} \, d\mu(x) \, < \, \infty.
\end{equation}
Define a Calder\'on-Zygmund operator in $(\mathcal{A}, \varphi)$ as any linear map $T$ satisfying the following properties:
\begin{itemize}
\item $T$ is bounded on $L_\infty(\mathcal{M}; L_2^r(\mu))$:
$$
\hskip10pt \Big\| \int_{\mathbb{R}^d} |Tf(x)^*|^2 \, d\mu(x) \Big\|_{\mathcal{M}}^{\frac12} \, \lesssim \, \Big\| \int_{\mathbb{R}^d} |f(x)^*|^2 \, d\mu(x) \Big\|_{\mathcal{M}}^{\frac12}.
$$
\item $T$ is bounded on $L_\infty(\mathcal{M}; L_2^c(\mu))$: 
$$
\hskip10pt \Big\| \int_{\mathbb{R}^d} |Tf(x)|^2 \, d\mu(x) \Big\|_{\mathcal{M}}^{\frac12} \, \lesssim \, \Big\| \int_{\mathbb{R}^d} |f(x)|^2 \, d\mu(x) \Big\|_{\mathcal{M}}^{\frac12}.
$$
\item The kernel representation 
$$
Tf(x) \, = \, \int_{\mathbb{R}^d} k(x,y) (f(y)) \, d\mu(y) \quad \mbox{holds for} \quad x \notin \mathrm{supp}_{\mathbb{R}^d} (f)
$$ 
and some kernel $k$ satisfying the operator valued H\"ormander condition \eqref{RBMO.hormander}. Here $\mathrm{supp}_{\mathbb{R}^d}(f)$ denotes the support of $f$ as a function on $\mathbb{R}^d$ (and not as an operator in $\mathcal{A}$). 
\item $k$ satisfies the $n$-dimensional size condition
$$
\|k(x,y)\|_{\mathcal{M}} \lesssim \frac{1}{|x-y|^n}, \; x\not= y.
$$
\end{itemize}
The first two conditions are the natural replacement for the standard $L_2$ boundedness, see \cite{junge-mei-parcetSmooth} for explanations. Let us prove theorem \ref{RBMO.theoremD}:

\begin{proof}[Proof of theorem \ref{RBMO.theoremD}]
We only prove that $T:\mathcal{A} \to \mathrm{RBMO}_{\Sigma}^c(\mathcal{A})$ using the $L_\infty(\mathcal{M}; L_2^c(\mu))$-boundedness. Then, the bound $T:\mathcal{A} \to \mathrm{RBMO}_{\Sigma}^r(\mathcal{A})$ will follow analogously from $L_\infty(\mathcal{M}; L_2^r(\mu))$-boundedness and the proof will be complete. Recall that 
$$
\|f\|_{\mathrm{RBMO}_{\Sigma}^c(\mathcal{A})} = \sup_{k\in\mathbb{Z}} \Big\|\mathsf{E}_{\Sigma_k}|f-\mathsf{E}_{\Sigma_{k-1
}}f|^2 \Big\|_{\mathcal{M}}^{\frac12} = \sup_{Q \in \Pi(\Sigma)} \left\|  \dashint_Q \left|f - \langle f\rangle_{\widehat{Q}} \right|^2 d\mu\right\|_{\mathcal{M}}^{\frac12}.
$$
Fix $Q\in\Pi(\Sigma)$ and $f\in \mathcal{A}$. We can write
\begin{eqnarray*}
\left\|\frac{1}{\mu(Q)} \int_Q \left|Tf - \langle Tf\rangle_{\widehat{Q}}\right|^2 d\mu\right\|_{\mathcal{M}}^{\frac12} & \leq & \left\|\left\langle |T(f\chi_{\alpha B_Q})|^2\right\rangle_Q \right\|_{\mathcal{M}}^{\frac12} + \left\|\left\langle \left|\langle T(f\chi_{\alpha B_{\widehat{Q}}}) \rangle_{\widehat{Q}}\right|^2\right\rangle_Q\right\|_{\mathcal{M}}^{\frac12} \\
& + & \left\|\left\langle\left|\int_{\alpha B_{\widehat{Q}}\setminus \alpha B_Q}k(\cdot,y)f(y) d\mu(y) \right|^2 \right\rangle_Q \right\|_{\mathcal{M}}^{\frac12} \\
& + & \left\| \dashint_Q \left| \dashint_{\widehat{Q}} \int_{\mathbb{R}^d \setminus \alpha B_{\widehat{Q}}} (k(x,y)-k(z,y))f(y)d\mu(y)d\mu(z)\right|^2 d\mu(x)\right\|_{\mathcal{M}}^{\frac12} \\
& =: & \mathrm{I} + \mathrm{II}+ \mathrm{III}+ \mathrm{IV}.
\end{eqnarray*}

\noindent 
By the Kadison-Schwarz inequality for unital completely positive maps \cite{lanceBook}, we know that $|\langle f\rangle_Q|^2 \leq \langle |f|^2\rangle_Q$. This, combined with $L_\infty(\mathcal{M}; L_2^c(\mu))$-boundedness and the fact that $B_{\widehat{Q}}$ is $(\alpha,\beta)$-doubling, yields
\begin{eqnarray*}
\mathrm{II} & \leq & \left\|\left\langle \left| T(f\chi_{\alpha B_{\widehat{Q}}}) \right|^2\right\rangle_{\widehat{Q}}\right\|_{\mathcal{M}}^{\frac12} \\
& \lesssim & \left\|\frac{1}{\mu(B_{\widehat{Q}})} \int_{\mathbb{R}^d} \left| f\chi_{\alpha B_{\widehat{Q}}} \right|^2 d\mu\right\|_{\mathcal{M}}^{\frac12} \\
& \leq & \esssup_{x \in \alpha B_{\widehat{Q}}} \|f(x)\|_{\mathcal{M}} \left(\frac{\mu(\alpha B_{\widehat{Q}})}{\mu(B_{\widehat{Q}})} \right)^{\frac12} \lesssim \|f\|_{\mathcal{A}}. \\
\end{eqnarray*}
A similar computation holds for $\mathrm{I}$. Next, we use the triangle inequality and the $C^*$ property $\|x^*x\|_{\mathcal{M}} = \|x\|_{\mathcal{M}}^2$ to compute
\begin{eqnarray*}
\mathrm{III} & \leq & \left(\left\langle \left( \int_{\alpha B_{\widehat{Q}} \setminus \alpha B_Q} \|k(\cdot,y) f(y)\|_\mathcal{M} d\mu(y) \right)^2 \right\rangle_Q \right)^{\frac12} \\
& \leq & \|f\|_{\mathcal{A}} \sup_{x\in Q} \int_{\alpha B_{\widehat{Q}} \setminus \alpha B_Q} \|k(x,y)\|_{\mathcal{B}(\mathcal{M})} d\mu(y) \\
& \lesssim & \|f\|_{\mathcal{A}} \sup_{x\in Q} \int_{\alpha B_{\widehat{Q}} \setminus \alpha B_Q} \frac{1}{|x-y|^n} d\mu(y) \lesssim \|f\|_{\mathcal{A}}, \\
\end{eqnarray*}
by the size condition of $k$ and (iv) of theorem \ref{RBMO.theoremA}. The last term can be estimated using the operator valued H\"ormander condition:
\begin{eqnarray*}
\mathrm{IV} & \leq & \left( \dashint_Q \left\| \dashint_{\widehat{Q}} \int_{\mathbb{R}^d \setminus \alpha B_{\widehat{Q}}} (k(x,y) - k(z,y))f(y) d\mu(y)  d\mu(z) \right\|_{\mathcal{M}}^2 d\mu(x) \right)^{\frac12} \\
& \leq & \left( \dashint_Q \left( \dashint_{\widehat{Q}} \left\| \int_{\mathbb{R}^d \setminus \alpha B_{\widehat{Q}}} (k(x,y) - k(z,y))f(y) d\mu(y) \right\|_{\mathcal{M}} d\mu(z) \right)^2 d\mu(x) \right)^{\frac12} \\
& \leq & \|f\|_{\mathcal{A}} \left( \dashint_Q \left( \dashint_{\widehat{Q}} \int_{\mathbb{R}^d \setminus \alpha B_{\widehat{Q}}} \|k(x,y) - k(z,y)\|_{\mathcal{B}(\mathcal{M})} d\mu(y) d\mu(z) \right)^2 d\mu(x) \right)^{\frac12}\\
& \lesssim & \|f\|_{\mathcal{A}} \left(\dashint_Q \left( \dashint_{\widehat{Q}} d\mu(z) \right)^2 d\mu(x)\right)^{\frac12} = \|f\|_{\mathcal{A}}.
\end{eqnarray*}
We are done.
\end{proof}

\begin{remark}
The endpoint estimate just proven and the interpolation property of $\mathrm{RBMO}_\Sigma(\mathcal{A})$ allows to deduce boundedness of Calder\'on-Zygmund operators on $L_p(\mathcal{A})$, $1<p<\infty$. Notice that this is a noncommutative $L_p$ scale, and not a vector valued one. In any case, due to the fact that $L_p(\mathcal{M})$ is UMD for $1<p<\infty$ and observing that $L_p(\mathcal{A})$ is also a vector valued space, this boundedness can be deduced from the vector valued theory whenever the kernel $k(x,y)$ is scalar valued. However, as was discussed in \cite{parcet2009}, the use of a noncommutative interpolation scale yields better constants in the inequalities in terms of dependence on $p$.
\end{remark}

\begin{remark}
As the proof shows, Lipschitz smoothness on the kernel is not required to obtain the $\mathcal{A} \to \mathrm{RBMO}_\Sigma(\mathcal{A})$ endpoint estimate, and one can work only with H\"ormander condition in conjunction with the size condition (of degree equal to the dimension of the measure $\mu$).
\end{remark}

\bibliography{BibliographyRBMO}{}
\bibliographystyle{abbrv}

\vskip20pt

\hfill \noindent \textbf{Jos\'e M. Conde-Alonso} \\
\null \hfill Departament de Matem\`atiques \\ 
\null \hfill Universitat Aut\`onoma de Barcelona \\ 
\null \hfill 08193, Bellaterra, Barcelona. Spain \\ 
\null \hfill\texttt{jconde@mat.uab.cat}

\vskip5pt


\hfill \noindent \textbf{Javier Parcet} \\
\null \hfill Instituto de Ciencias Matem{\'a}ticas \\ \null \hfill
CSIC-UAM-UC3M-UCM \\ \null \hfill Consejo Superior de
Investigaciones Cient{\'\i}ficas \\ \null \hfill C/ Nicol\'as Cabrera 13-15.
28049, Madrid. Spain \\ \null \hfill\texttt{javier.parcet@icmat.es}

\end{document}